\newcommand{\mathsym}[1]{{}}
\newtheorem{thm}{Theorem}[section]
\newtheorem{lem}[thm]{Lemma}
\newtheorem{prop}[thm]{Proposition}
\theoremstyle{definition}
\newtheorem{defn}{Definition}[section]
\numberwithin{equation}{section}
\theoremstyle{remark}
\theoremstyle{example}
\newcommand{\n}{\nabla}
\newcommand{\no }{\nonumber }
\newcommand{\md }{\mathrm{d}}
\newcommand{\de}{\partial}
\newcommand{\ov}{\overline}
\newcommand{\be}{\begin{equation}}
\newcommand{\ee}{\end{equation}}
\newcommand{\bag}{\begin{eqnarray}}
\newcommand{\eag}{\end{eqnarray}}
\newcommand{\ban}{\begin{eqnarray*}}
\newcommand{\ean}{\end{eqnarray*}}
\newcommand{\ba}{\begin{aligned}}
\newcommand{\ea}{\end{aligned}}
\newcommand{\lf}{\left}
\newcommand{\rt}{\right}
\newcommand{\al}{\alpha}
\newcommand{\bpf}{\begin{proof} }
\newcommand{\epf}{\end{proof} }
\newcommand{\ric}{\mathrm{Ric}}
\newcommand{\tr}{\mathrm{tr}}
\newcommand{\F}{\mathcal{F}}
\newcommand{\ti}{\tilde}
\newcommand{\mn}{\sqrt{-1}}
\newcommand{\dbar}{\overline{\partial}}
\newcommand{\ddt}{\frac{\partial}{\partial t}}
\newcommand{\ddb}{\sqrt{-1}\partial \ov{\partial}}
\newcommand{\R}{ \mathbb{R}}
\newcommand{\q}{ \mathbb{Q}}
\newcommand{\C}{ \mathbb{C}}
\newcommand{\h}{ \mathbb{H}}
\newcommand{\torus}{ \mathbb{T}}
\newcommand{\s}{ \mathbb{S}}
\begin{document}
\title{the Chern-Ricci flow on Oeljeklaus-Toma manifolds}
\author{Tao Zheng }
\thanks{Supported by China Postdoctoral Science Foundation funded project grant Nos. 2014M550620 and 2015T80040, and by  National Natural Science Foundation of China grant No. 11401023}
\subjclass[2010]{53C44, 53C55, 32W20, 32J18, 32M17}
\keywords{the Chern-Ricci flow, Oeljeklaus-Toma manifold, Calabi-type estimate, Gromov-Hausdorff convergence}

\maketitle
\begin{abstract}
We study the Chern-Ricci flow, an evolution equation of Hermitian metrics, on a family of Oeljeklaus-Toma (OT-) manifolds which are non-K\"{a}hler compact complex manifolds with negative Kodaira dimension. We prove that, after an initial conformal change, the flow converges, in the
Gromov-Hausdorff sense, to a torus with a flat Riemannian metric determined by the OT-manifolds themselves.
\end{abstract}

\section{Introduction}\label{section1}
The Chern-Ricci flow is an evolution equation for Hermitian metrics by their Chern-Ricci forms on complex manifolds, which coincides exactly with the K\"{a}hler-Ricci flow when the initial metric is K\"{a}hlerian. It was introduced by Gill \cite{gill} in the setting of complex manifolds with vanishing first Bott-Chern class. Tosatti and Weinkove \cite{twjdg,twcomplexsurface} investigated the flow on more general complex manifolds and proposed a program to study its behavior on all compact surfaces. The results in \cite{twjdg,twcomplexsurface,twymathann,gill13,gillsmith,nie,shermanweinkove} are closely similar to those for the K\"{a}hler-Ricci flow, and provide affirmative evidence that the Chern-Ricci flow is a natural geometric flow on complex surfaces whose properties reflect the underlying geometry of these manifolds.

Class \uppercase\expandafter{\romannumeral7} surfaces are by definition non-K\"{a}hler compact complex surfaces with negative Kodaira dimension and first Betti number one. This class of surfaces are of especial interest because there exists a well-known problem to complete their classification. Naturally, we will try to understand the properties of the Chern-Ricci flow on these surfaces, with the long-term aim of obtaining more topological or complex-geometric properties (cf. \cite{streetstian}, where a different flow is considered). In this direction, in \cite{ftwz}, the authors consider a family of Class \uppercase\expandafter{\romannumeral7} surfaces, known as Inoue surfaces (see \cite{inoue}), and proved that, for a large class of Hermitian metrics, the Chern-Ricci flow always collapses the Inoue surface to a circle at infinite time, in the Gromov-Hausdorff sense. Also, the authors \cite{ftwz} posed some conjectures and open problems concerned by the Chern-Ricci flow.

In this paper, we will concentrate on part of Problem 3 proposed in \cite {ftwz}, that is, we will study the behavior of the Chern-Ricci flow on a family of well-understood Oeljeklaus-Toma (OT-) manifolds, analogous to the Inoue-Bombieri surfaces $S_M$ (see \cite{inoue}) in high dimensions. OT-manifolds, first constructed by Oeljeklaus and Toma \cite {ot} from the view of algebraic number theory, are non-K\"{a}hler compact complex solvmanifolds with negative Kodaira dimension and without Vaisman metrics (see \cite[Proposition 2.5]{ot} and \cite[Section 6]{kasuya}). Battisti and Oeljeklaus \cite[Theorem 3.5]{bo} states that OT-manifolds admit no analytic hypersufaces and algebraic dimension is zero. Verbitsky \cite{verbitsky1,verbitsky2} also proved that OT-manifolds carry no closed $1$-dimensional analytic subspaces and that
OT-manifolds can not contain any nontrivial compact complex $2$-dimensional submanifolds except the Inoue surfaces.
More recent progresses and open problems about OT-manifolds can be found in \cite{ovu} and references therein.

We investigate a class of complex $m$-dimensional OT-manifolds denoted by $M_K$ with universal cover $\h^{m-1}\times \C$ and quotient covering map is denoted by $\pi:\;\h^{m-1}\times \C\longrightarrow M_K$ (see Section \ref{sectionot}), where $\h$ is the upper half plane. This class of OT-manifolds have locally conformally K\"{a}hler metric structure and admit no non-trivial complex subvariety (see \cite{ot,ov} and \cite[Theorem 4.5]{ovu}). In particular, the OT-manifolds with universal cover $\h^{2}\times \C$ give  counterexamples to a conjecture of Vaisman \cite[Page 8]{do} (see also \cite[Section 4.2]{ovu}). Denote the standard coordinates on $\h^{m-1}\times \C$ by $(z_1,\cdots,z_m)$. On OT-manifolds $M_K$ a constant multiple of the product of standard Poincar\'{e} metric $\al=\mn\sum\limits_{i=1}^{m-1}\frac{\md z_i\wedge \md \ov{z_i}}{4(\mathcal{I}\mathrm{m} z_i)^2}$ on $\h^{m-1}$ descends to a closed semipositive real $(1,1)$ form on $M_K$ denoted by $\omega_{\infty}$ (also denoted by $\al$ itself) with
$$
0\leq \omega_{\infty}\in-c_{1}^{\mathrm{BC}}(M_K),
$$
where $c_{1}^{\mathrm{BC}}(M_K)$ is the first Bott-Chern class of $M_K$. The $(1,1)$ form $\omega_{\infty}$ will play a key role in our results.

We consider the normalized Chern-Ricci flow
\be\label{crf}
\ddt\omega=-\ric(\omega)-\omega,\quad\omega|_{t=0}=\omega_{0}
\ee
on $M_K$, with an initial Hermitian metric $\omega_{0}$. Here $\mathrm{Ric}(\omega)$ is the \emph{Chern-Ricci} form of the Hermitian metric $\omega=\mn g_{i\ov{j}}\md z_i\wedge \md\ov{z_j}$ defined by
$$
\mathrm{Ric}(\omega)=-\ddb \log\mathrm{det}g.
$$
Since the canonical bundle of $M_K$ is nef, the results of \cite{twjdg,twcomplexsurface,twymathann} imply that there exists a unique solution to (\ref{crf}) for all time. We are concerned the behavior of the normalized Chern-Ricci flow as $t\longrightarrow \infty$.
\begin{thm}\label{thm1}
Let $M_K$ be an OT-manifold and $\omega$ be any Hermitian metric on $M_K$. Then there exists a Hermitian metric $\omega_{\mathrm{LF}}=e^{\sigma}\omega$ in the conformal class of $\omega$ such that the following holds.

Let $\omega(t)$ be the solution of the normalized Chern-Ricci flow (\ref{crf}) with the initial Hermitian metric of the form
$$
\omega_0=\omega_{\mathrm{LF}}+\ddb \rho>0.
$$
Then as $t\longrightarrow \infty$,
$$
\omega(t)\longrightarrow \omega_{\infty}
$$
uniformly on $M_K$ and exponentially fast, where $\omega_{\infty}$ is the $(1,1)$ form defined above. Furthermore,
$$
(M_K,\omega(t))\longrightarrow (\torus^{m-1},g)
$$
in the Gromov-Haudorff sense, where $g$ defined in (\ref{metrictorus}) is the flat Riemannian metric on torus $\torus^{m-1}$ determined by the OT-manifold $M_K$.
\end{thm}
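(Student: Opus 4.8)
The plan is to follow the standard reduction of the Chern-Ricci flow to a scalar parabolic complex Monge--Amp\`ere equation and then feed the resulting estimates into a collapsing analysis dictated by the product structure $\h^{m-1}\times\C$. Since $\ric(\om)=-\ddb\log\det g$ depends on $\om$ only through its volume form, and since $\om_\infty\in-c_1^{\mathrm{BC}}(M_K)$, I would first fix a smooth positive volume form $\Om$ on $M_K$ with $\ric(\Om)=-\om_\infty$ (available because $-\om_\infty$ represents $c_1^{\mathrm{BC}}(M_K)$, and $\ric(e^f\Om)=\ric(\Om)-\ddb f$ realizes every representative), and put $\hat\om(t)=e^{-t}\om_{\mathrm{LF}}+(1-e^{-t})\om_\infty$, so that $\ddt\hat\om=-\hat\om+\om_\infty$ and $\hat\om(t)>0$ for all $t$ as a convex combination of the metric $\om_{\mathrm{LF}}$ and the semipositive form $\om_\infty$. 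Writing $\om(t)=\hat\om(t)+\ddb\varphi(t)$ then turns (\ref{crf}) into
\be\label{maflow}
\ddt\varphi=\log\frac{(\hat\om+\ddb\varphi)^m}{\Om}-\varphi,\qquad \varphi|_{t=0}=\rho,
\ee
an exact reduction since $-\ric(\om)-\om-\ddt\hat\om=\ddb\big(\log(\om^m/\Om)-\varphi\big)$. The role of the conformal representative $\om_{\mathrm{LF}}=e^\sigma\om$ is to normalize the underlying locally conformally K\"ahler data---fixing the Lee/torsion terms and the background volume form relative to $\om_\infty$---so that the limit of the flow is exactly $\om_\infty$ and the estimates below close; its construction is the preliminary step of choosing $\sigma$ to solve the corresponding scalar gauge condition.

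Next I would carry out the a priori estimates. One subtlety is that $\om_\infty$ has rank $m-1$, so $\om_\infty^m=0$ and the total volume $\om(t)^m\sim m\,e^{-t}\,\om_\infty^{m-1}\wedge\om_{\mathrm{LF}}$ collapses to zero like $e^{-t}$; accordingly $\varphi$ carries a time-linear drift and the estimates should be made on $\ddt\varphi$ and on the spatial oscillation of $\varphi$ rather than on $\varphi$ itself. The linear reaction term $-\varphi$ in (\ref{maflow}) is the engine of exponential decay: a maximum-principle argument applied to $\ddt\varphi$ and to suitable barriers should give $\|\ddt\varphi\|_{C^0}\le C$ together with exponential decay of the oscillation of $\varphi$, which already forces the Bott--Chern class of $\om(t)$ to converge exponentially to $-c_1^{\mathrm{BC}}(M_K)$. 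The genuinely hard estimate is the second-order one: because $\om_\infty$ degenerates along the $\C$-fibre, the evolving metrics $\om(t)$ are \emph{not} uniformly equivalent to any fixed Hermitian metric and the flow collapses, so the usual parabolic Schauder machinery does not apply directly. Here I expect to need a \textbf{Calabi-type} (third-order) estimate in the spirit of Tosatti--Weinkove: controlling a quantity of the form $S=|\n g|^2$ through its evolution inequality together with the zeroth-order bounds, one obtains uniform control of $\om(t)$ and its first derivatives on $M_K$ despite the degeneration. Lifting everything to the universal cover $\h^{m-1}\times\C$, where $\om_\infty$ and the $\G$-action are completely explicit, should render the curvature and torsion terms in this estimate computable and is where the homogeneity of the Poincar\'e metric is used decisively.

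With uniform estimates in hand, parabolic bootstrapping yields smooth convergence $\om(t)\to\om_\infty$, and the rate is inherited from the exponential decay of the oscillation of $\varphi$; this proves the first assertion. For the Gromov--Hausdorff statement I would analyze the degenerate limit geometry directly on the quotient, using two distinct collapse mechanisms. Passing to the coordinates $s_i=\log(\mathcal{I}\mathrm{m}\,z_i)$, the Poincar\'e form becomes $\sum_i(\md s_i^2+e^{-2s_i}\,\md x_i^2)$ up to a constant, so (i) the $\C$-fibre $z_m$ lies in the kernel of $\om_\infty$ and its diameter shrinks to zero along the flow, while (ii) the real parts $x_i=\mathcal{R}\mathrm{e}\,z_i$ collapse \emph{even though $\om_\infty$ is nondegenerate there}, because the additive part of $\G$ projects to a \emph{dense} subgroup in the $x_i$-directions, so any two such points are joined by arbitrarily short loops in $M_K$ and hence have zero $\om_\infty$-distance. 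The surviving directions $(s_1,\dots,s_{m-1})$ carry the flat metric $\sum_i\md s_i^2$ and are acted on by the unit part of $\G$ as a \emph{cocompact} rank-$(m-1)$ lattice of translations $s_i\mapsto s_i+\log|u|$, whose quotient is precisely $(\torus^{m-1},g)$ with $g$ as in (\ref{metrictorus}). Making this rigorous amounts to producing explicit $\varepsilon$-nets realizing the Gromov--Hausdorff distance, i.e.\ showing that the fibres of the natural projection $M_K\to\torus^{m-1}$ collapse uniformly, which follows by combining the uniform convergence $\om(t)\to\om_\infty$ with the density and cocompactness statements recalled in Section \ref{sectionot}.

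I would expect the main obstacle to be the second-order Calabi-type estimate under collapse: controlling the metric and its derivatives uniformly when the limiting class is degenerate is exactly the step where the standard, non-collapsing arguments break down, and it is what must be engineered carefully using the explicit structure of $M_K$. The subsidiary difficulty on the geometric side is that the collapse in the $x_i$-directions is not a metric degeneration at all but a consequence of the arithmetic density of the translation lattice, so the identification of the limit as the flat torus $(\torus^{m-1},g)$ relies essentially on the number-theoretic description of the $\G$-action rather than on the analysis of the flow itself.
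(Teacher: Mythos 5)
Your overall architecture --- reduction to a parabolic Monge--Amp\`ere equation with reference metrics $\ti\omega(t)=e^{-t}\omega_{\mathrm{LF}}+(1-e^{-t})\al$, zeroth-order estimates on $\varphi$ and $\dot\varphi$, and a Gromov--Hausdorff argument based on the density of the leaves of $\F$ in the fibers of $p:M_K\longrightarrow\torus^{m-1}$ --- matches the paper, and your treatment of the Gromov--Hausdorff limit is essentially the argument given there. But the step that produces uniform \emph{exponential} convergence of the metrics $\omega(t)\longrightarrow\al$ has a genuine gap. Exponential decay of $\varphi$ does not control $\ddb\varphi=\omega-\ti\omega$ in $C^0$, and ``parabolic bootstrapping yields smooth convergence'' fails under collapse: the paper only claims $C^0$ convergence in Theorem \ref{thm1}, and even for the special initial data of Theorem \ref{thm3}, where a Calabi-type estimate \emph{is} available, one obtains only uniform $C^1$ bounds and $C^{\al}$ convergence. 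Moreover, for a general $\omega_{\mathrm{LF}}$ the Calabi estimate you invoke is not established at all: Lemma \ref{keylemma} only gives $|\ov{\de}\ti T|_{\ti g}+|\ti\n\ti T|_{\ti g}+|\widetilde{\mathrm{Rm}}|_{\ti g}\leq Ce^{t/2}$, growing in time, and the third-order estimate (\ref{3rd}) is proved only for the reference metric built from $\omega_{\mathrm{OT}}$, using the exact identities $\de_{m}\ti g_{k\ov{\ell}}=0$. The paper's actual mechanism for part (\romannumeral4) of Theorem \ref{thmuse} uses no third-order information: one proves the trace pinching $\tr_{\omega}\ti\omega-m\leq Ce^{-\varepsilon t}$ together with the volume-ratio bound $\ti\omega^m/\omega^m\geq 1-C e^{-\sigma t}$ (both consequences of the decay of $\varphi+\dot{\varphi}$, which in turn requires the Chern scalar curvature bound $R\leq Ce^{t/2}$ obtained via a Cheng--Yau gradient argument), and then applies a linear-algebra pinching lemma to conclude $\|\omega-\ti\omega\|_{C^{0}}\leq Ce^{-\sigma t/2}$. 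This chain is the missing idea in your write-up.

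A second, smaller gap is that the gauge condition defining $\sigma$ is left unspecified, yet it is load-bearing. The paper takes $e^{\sigma}=\frac{\al^{m-1}\wedge\beta}{\al^{m-1}\wedge\omega}$, which makes $\omega_{\mathrm{LF}}$ strongly flat along the leaves, i.e. $(g_{\mathrm{LF}})_{m\ov{m}}=c\,y_1\cdots y_{m-1}$ on the universal cover. This is precisely what makes $\Omega=m\al^{m-1}\wedge\omega_{\mathrm{LF}}$ satisfy $\ddb\log\Omega=\al$ and $e^{t}\ti\omega^m/\Omega=1+O(e^{-t})$, and what yields $\de_{m}\ti g_{m\ov{m}}=0$, the identity underlying the torsion and curvature bounds of Lemma \ref{keylemma}; without identifying this condition the a priori estimates do not close. (Minor: your normalization of the Monge--Amp\`ere equation omits the factor $e^{t}$ inside the logarithm, so your $\varphi$ drifts linearly in $t$ and you must track oscillations, whereas with the paper's normalization one gets $|\varphi|\leq C(1+t)e^{-t}$ directly.)
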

Therefore, we prove that the (normalized) Chern-Ricci flow collapses a Hermitian metric $\omega$ on the OT-manifold $M_K$ to a torus, modulo an initial conformal change to $\omega$. Indeed, we prove more than this, since our initial Hermitian metric can be any one in the $\de\ov{\de}$-class of $e^{\sigma}\omega$. Note that this collapsing to $\torus^{m-1}$ is in stark contrast with the properties of the K\"{a}hler-Ricci flow which always collapses to even-dimensional manifolds (cf.\cite{fong,gill14,songtian,songtianjams,songtian11,swlecturenotes,twypreprint}).

Our conformal change is relative to a holomorphic foliation structure without singularity $\F$ defined by $\omega_{\infty}$ on the OT-manifold $M_K$. Now we give an outline of the explanation of this holomorphic foliation structure (more details can be found in Section \ref{sectionot}). Note that $\F$ can be induced by the holomorphic foliation $\ti\F$ generated by $\de_{z_m}$ on the universal covering manifold $\h^{m-1}\times \C$ and every leaf of $\ti\F$ is of form
$
\{z'\}\times \C,
$
where $z'\in \h^{m-1}$.
Motivated by \cite{ftwz}, we give some definitions and deduce a useful proposition as follows.
\begin{defn}\label{defn}
A Hermitian metric $\omega$ on $M_K$ is called \emph{flat along the leaves} if the restriction of $\pi^{\ast}\omega$  to every leaf of $\ti\F$ is a flat K\"{a}hler metric on $\C$, and called \emph{strongly flat along the leaves} if this restriction of $\pi^{\ast}\omega$  to every leaf of $\ti\F$ equals to
$$
c((\mathcal{I}\mathrm{m} z_1)\cdots(\mathcal{I}\mathrm{m} z_{m-1}))\mn \md z_{m}\wedge\md\ov{z_m},
$$
where $c>0$ is a constant independent of the leaf.
\end{defn}
The Hermitian metric $\omega_{\mathrm{LF}}$ we need in the statement of Theorem \ref{thm1} is exactly strongly flat along the leaves.
The following proposition shows that the assumption of being strongly flat along the leaves is not  in fact restrictive  because it can always be obtained from any Hermitian metric $\omega$ by a conformal change (see also Lemma \ref{lemleafflat}).
\begin{prop}\label{prop}
For any Hermitian metric $\omega$ on the OT-manifold $M_K$, there exists a smooth function $\sigma\in C^{\infty}(M_K,\,\R)$ such that $\omega_{\mathrm{LF}}:=e^{\sigma}\omega$ is strongly flat along the leaves.
\end{prop}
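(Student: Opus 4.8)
The plan is to write $\sigma$ down explicitly on the universal cover and then check that it descends to $M_K$. Pulling back the given metric, write $\pi^{\ast}\omega=\mn\, g_{i\ov{j}}\,\md z_i\wedge\md\ov{z_j}$ on $\h^{m-1}\times\C$, where $(g_{i\ov{j}})$ is a positive-definite Hermitian matrix of smooth functions that is invariant under the deck transformation group $\G$. Restricting to a leaf $\{z'\}\times\C$ of $\ti\F$ kills every $\md z_i$ with $i<m$, so the induced metric is $\mn\, g_{m\ov{m}}(z',z_m)\,\md z_m\wedge\md\ov{z_m}$. To make the conformal multiple $e^{\sigma}\omega$ strongly flat along the leaves I therefore need $e^{\sigma}g_{m\ov{m}}=c\prod_{i=1}^{m-1}(\mathcal{I}\mathrm{m}\, z_i)$ for some constant $c>0$, which forces the definition
$$
\pi^{\ast}\sigma=\log\frac{c\prod_{i=1}^{m-1}(\mathcal{I}\mathrm{m}\, z_i)}{g_{m\ov{m}}}.
$$
Since $g_{m\ov{m}}>0$, being a diagonal entry of a positive-definite matrix, and each $\mathcal{I}\mathrm{m}\, z_i>0$ on $\h^{m-1}$, the argument of the logarithm is a smooth positive function, so $\pi^{\ast}\sigma$ is a well-defined smooth real function.

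The only genuine issue, and the crux of the argument, is whether $\pi^{\ast}\sigma$ is $\G$-invariant, hence descends to $\sigma\in C^{\infty}(M_K,\R)$; this is where the arithmetic of the OT-construction enters. The deck group is generated by translations $z\mapsto z+a$ by elements $a\in\mathcal{O}_K$ (embedded via the $m-1$ real and one complex places) and by the unit action $z\mapsto(\ldots,\sigma_i(u)z_i,\ldots,\sigma_m(u)z_m)$ for $u$ in the totally positive unit subgroup used in the construction. Under a translation each $\mathcal{I}\mathrm{m}\, z_i$ is unchanged, because the real embeddings shift $z_i$ by a real number, and $g_{m\ov{m}}$ is likewise unchanged, so the ratio is invariant.

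For a unit $u$ the map is diagonal and holomorphic, so the $\G$-invariance of $\pi^{\ast}\omega$, read off on the $(m,\ov{m})$ component, yields the transformation law $g_{m\ov{m}}(uz)=|\sigma_m(u)|^{-2}g_{m\ov{m}}(z)$. Meanwhile $\prod_{i=1}^{m-1}\mathcal{I}\mathrm{m}(\sigma_i(u)z_i)=\big(\prod_{i=1}^{m-1}\sigma_i(u)\big)\prod_{i=1}^{m-1}\mathcal{I}\mathrm{m}\, z_i$, since the $\sigma_i(u)$ are positive reals. Hence the ratio defining $\pi^{\ast}\sigma$ is multiplied by
$$
\Big(\prod_{i=1}^{m-1}\sigma_i(u)\Big)\,|\sigma_m(u)|^{2}=N_{K/\mathbb{Q}}(u),
$$
the full field norm of $u$, which equals $1$ because $u$ is a totally positive unit of $\mathcal{O}_K$. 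Thus $\pi^{\ast}\sigma$ is $\G$-invariant and descends to $\sigma\in C^{\infty}(M_K,\R)$.

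It then remains only to read off the conclusion: with $\sigma$ so defined, $\omega_{\mathrm{LF}}=e^{\sigma}\omega$ is again a Hermitian metric, being a positive conformal multiple of one, and on each leaf its restriction is $c\big(\prod_{i=1}^{m-1}\mathcal{I}\mathrm{m}\, z_i\big)\,\mn\,\md z_m\wedge\md\ov{z_m}$, which is independent of $z_m$ and hence a flat K\"ahler metric on $\C$ of exactly the form required by Definition \ref{defn}. I expect the single real obstacle to be establishing the transformation law $g_{m\ov{m}}(uz)=|\sigma_m(u)|^{-2}g_{m\ov{m}}(z)$ and matching the resulting scaling factor with the norm $N_{K/\mathbb{Q}}(u)=1$; once the explicit formula for $\sigma$ is in hand, everything else is routine bookkeeping.
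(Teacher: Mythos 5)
Your proposal is correct, and the function $\sigma$ you construct is literally the same one the paper uses (the paper takes $c=1$); the difference lies in how descent to $M_K$ is established. The paper's Lemma \ref{lemleafflat} defines $e^{\sigma}=\frac{\al^{m-1}\wedge\beta}{\al^{m-1}\wedge\omega}$ as a ratio of two nowhere-vanishing global $(m,m)$-forms on $M_K$, so well-definedness and smoothness are automatic, and it then computes on the cover that this ratio equals $(y_1\cdots y_{m-1})/g_{m\ov{m}}$ --- exactly your formula. You instead start from that coordinate expression on $\h^{m-1}\times\C$ and verify $\Gamma$-invariance by hand: translations are immediate, and the unit action reduces to $\bigl(\prod_{i=1}^{m-1}\sigma_i(u)\bigr)|\sigma_m(u)|^2=N_{K/\q}(u)=1$ for $u\in O_{K}^{\ast,+}$, using the transformation law $g_{m\ov{m}}(uz)=|\sigma_m(u)|^{-2}g_{m\ov{m}}(z)$ read off from the $(m,\ov{m})$-component of the invariance of $\pi^{\ast}\omega$ --- all of which is correct. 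What your route buys is self-containedness: your norm computation is precisely the proof that $\beta=\mn(y_1\cdots y_{m-1})\md z_m\wedge\md\ov{z_m}$ is $\Gamma$-invariant, a fact the paper imports from \cite{ot,ov} without rederiving. What the paper's route buys is that the intrinsic formula makes globality transparent with no arithmetic, and the same lemma simultaneously yields the characterization of metrics that are merely flat (not strongly flat) along the leaves via the density of the leaves in the torus fibers, which your argument does not address but which the proposition does not require.
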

We remark that in the case of Inoue surfaces $S_M$, Definition \ref{defn} and Proposition \ref{prop} specialize to the corresponding ones in \cite{ftwz}.

Another interesting question is whether we can get the smooth ($C^{\infty}$) convergence of $\omega(t)$ to $\omega_{\infty}$ instead of the uniform ($C^{0}$) convergence in Theorem \ref{thm1}.  In this direction,  if the initial Hermitian metric is of a more restricted type, then we can get $C^{\al}$ convergence for $0<\al<1$. More precisely, Oeljeklaus and Toma \cite{ot} and Ornea and Verbitsky \cite{ov} constructed an explicit Hermitian metric $\omega_{\mathrm{OT}}$ defined in (\ref{otmetric}) on OT-manifold $M_K$ exactly we consider, which is strongly flat along the leaves. For the initial Hermitian metrics in the $\de\ov{\de}$-class of $\omega_{\mathrm{OT}}$ and  $0<\al<1$, we prove the $C^{\al}$ convergence as follows.
\begin{thm}\label{thm3}
Let $\omega(t)$ be the solution of the normalized Chern-Ricci flow (\ref{crf}) on an OT-manifold $M_K$ with an initial Hermitian metric of the form
$$
\omega_0=\omega_{\mathrm{OT}}+\ddb\rho>0.
$$
Then the solution metric $\omega(t)$ are uniformly bounded in the $C^1$ topology and for any $0<\al<1$, there holds
$$
\omega(t)\longrightarrow \omega_{\infty}, \quad \mbox{as}\;\;t\longrightarrow \infty
$$
in the $C^{\al}$ topology.
\end{thm}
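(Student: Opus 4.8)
The plan is to promote the uniform ($C^0$) convergence of Theorem \ref{thm1} to a uniform $C^1$ bound on the evolving metric, from which the $C^\al$ convergence follows by interpolation; the $C^1$ bound is the only genuinely new estimate required. First I would recast the normalized flow (\ref{crf}) as a parabolic complex Monge-Amp\`ere equation: since $[\om(t)]$ evolves linearly and tends to $-c_1^{\mathrm{BC}}(M_K)$, of which $\om_\infty$ is a smooth representative, I set the reference family $\hat{\om}_t = e^{-t}\om_{\mathrm{OT}} + (1-e^{-t})\om_\infty$ (positive for every finite $t$, degenerating along $\F$ as $t\to\infty$) and write $\om(t) = \hat{\om}_t + \ddb\varphi$. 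Then $\varphi$ solves $\ddt\varphi = \log\frac{(\hat{\om}_t+\ddb\varphi)^m}{\Om} - \varphi$ for a fixed smooth volume form $\Om$ with $\ddb\log\Om = \om_\infty$. From the proof of Theorem \ref{thm1} I may take as given the zeroth-order package --- uniform bounds on $\varphi$ and $\ddt\varphi$, the exponentially fast uniform convergence $\om(t)\to\om_\infty$, and the control of the metric that quantifies the collapse along the leaves of $\F$ --- so that what remains is a uniform bound on the first derivatives of the metric coefficients.

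The core step is a Calabi-type third-order estimate. Lifting to the universal cover $\h^{m-1}\times\C$, on which $\om_{\mathrm{OT}}$ and $\om_\infty$ are explicit and invariant under the covering group, I introduce the tensor $\Psi = \G(\om(t)) - \hat{\G}$ measuring the difference between the Chern connections of $\om(t)$ and a fixed background, and study $S = |\Psi|^2$ taken with respect to the evolving metric. Differentiating $S$ along the flow yields a heat-type inequality in which the good third-order term coming from $-\Delta_\om S$ competes against the background curvature and, since we work on a Hermitian (non-K\"ahler) manifold, against Chern-torsion terms. Applying the maximum principle to the test quantity $S + A\,\tr_\om\hat{\om}$ with $A$ large, so that the strictly negative contribution of $\tr_\om\hat{\om}$ absorbs the bad terms, produces a uniform bound on $S$, hence on $|\n g|$; combined with the zeroth-order control this gives the asserted uniform $C^1$ bound on $\om(t)$.

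The hard part will be the anisotropy created by the collapse: the evolving metric shrinks like $e^{-t}$ along the leaf direction $\de_{z_m}$ while remaining comparable to the Poincar\'e metric transversally, so the curvature and torsion terms entering the evolution of $S$ are not controlled by crude metric equivalence. This is precisely why the theorem is restricted to the $\ddb$-class of the explicit metric $\om_{\mathrm{OT}}$: its exact form on the cover lets me compute the background curvature and torsion and check that, after the choice of $A$, they are dominated by the good term uniformly in $t$. Once the $C^1$ bound is in hand, the $C^\al$ convergence is immediate from the interpolation inequality $\|f\|_{C^\al}\le C\|f\|_{C^0}^{1-\al}\|f\|_{C^1}^\al$ applied to the components of $f = \om(t)-\om_\infty$: the factor $\|f\|_{C^0}^{1-\al}\to 0$ exponentially by Theorem \ref{thm1} while $\|f\|_{C^1}$ stays bounded, whence $\om(t)\to\om_\infty$ in $C^\al$ for every $0<\al<1$.
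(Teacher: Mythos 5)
Your proposal follows essentially the same route as the paper: the same reference family $e^{-t}\omega_{\mathrm{OT}}+(1-e^{-t})\omega_{\infty}$, the same Sherman--Weinkove-type Calabi estimate on $S=|\Psi|^2$ with $\Psi$ the difference of Chern connections (the paper's key inputs being uniform, rather than $e^{t/2}$-growing, bounds on the torsion and curvature of the reference metric, available precisely because $\omega_{\mathrm{OT}}$ is explicit, plus control of the extra term $\ti\n\al$ coming from the time-dependence of the reference metric), and interpolation between the $C^0$ decay and the $C^1$ bound to get $C^{\al}$ convergence. This matches the paper's argument in substance.
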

We note that while the strategy of the proofs is the same
as in \cite{ftwz}, new difficulties arise due to the fact that these manifolds have
dimension greater than $2$.
This is the first general result
where collapsing of the Chern-Ricci flow in dimensions greater than $2$ is established for a large
class of manifolds and initial metrics.

\noindent {\bf Acknowledgements}
The author thanks Professor Valentino Tosatti and Professor Ben Weinkove for suggesting him the problem and Professor Xiaokui Yang for some helpful conversations. Sincere appreciation goes to Professor Valentino Tosatti for helping the author overcoming the difficulties in the preparation for this paper, pointing out some mistakes in calculation, offering more references and so many other useful comments on an earlier version of this paper.
This work was carried out when the author was visiting the Mathematics Department at Northwestern University and he thanks the department for its hospitality and for providing a good academic environment. The author is also grateful to the anonymous referees and the editor for their careful reading and helpful suggestions which greatly improved the paper.

\section{Oeljeklaus-Toma manifolds}\label{sectionot}
Let $\q$ be the field consisting of rational numbers and $K$ be a finite extension field of $\q$ with degree $[K:\q]=n$.
Then the field $K$ admits precisely $n=s+2t$ distinct embeddings $\sigma_1,\cdots,\,\sigma_n$ into the field $\C$ consisting of complex numbers, where $\sigma_1,\cdots,\,\sigma_s$ are
real embeddings and $\sigma_{s+1},\cdots,\,\sigma_n$ are complex embeddings. Without loss of generation, assume that $\sigma_{s+i}=\ov{\sigma_{s+t+i}}$ for $1\leq i\leq t$ because the complex embeddings of $K$ into $\C$ occur in pairs of complex conjugate embeddings. Also assume that both $s$ and $t$ are positive. Let
$$
\sigma:\;K\longrightarrow \C^m,\quad \sigma(a):=(\sigma_{1}(a),\cdots,\sigma_{s+t}(a))
$$
be the geometric representation of $K$.

Let $O_{K}$ be the ring of algebraic integers of $K$ and $O_{K}^{\ast}$ be the multiplicative group of units of $O_{K}$, that is,
$$
O_{K}^{\ast}:=\{a\in O_K:\;\sigma_1(a)\cdots\sigma_{s}(a)|\sigma_{s+1}(a)|^2\cdots|\sigma_{s+t}(a)|^2=\pm1\}
$$
and also let
$$
O_{K}^{\ast,+}:=\{a\in O_K^{\ast}:\;\sigma_1(a)>0,\cdots,\sigma_{s}(a)>0\}.
$$
It is well-known that the image $\sigma(O_K)$ is a lattice of rank $n$ in $\C^m$, where $m:=s+t$ (see for example \cite[Theorem 1 in Section 3 of Chapter 2]{bs}).
Therefore, we get a properly discontinuous action of $O_K$ on $\C^m$ by translations.

Consider the multiplicative action of $O_{K}$ on $\C^m$ given by
$$
az:=(\sigma_{1}(a)z_1,\cdots,\sigma_{m}(a)z_m).
$$
Denote by $\h$  the upper complex half-plane, that is, $\h=\{z\in \C:\;\mathcal{I}\mathrm{m} z>0\}$.
Since for $a\in O_{K}$, $a\sigma(O_{K})\subset \sigma(O_{K})$, combining the additive action of $O_{K}$ and the multiplicative action of
$O_{K}^{\ast,+}$, Oeljeklaus and Toma \cite{ot} (see also \cite{pv}) obtained a free action of $O_{K}^{\ast,+}\ltimes O_{K}$ on $\h^s\times \C^t$. Now consider the logarithmic representation of units
\begin{align}
& \mathrm{L}:\;O_{K}^{\ast,+}\longrightarrow\R^m,\nonumber\\
&\mathrm{L}(a):=(\log\sigma_{1}(a),\cdots,\log\sigma_{s}(a),2\log|\sigma_{s+1}(a)|,\cdots,2\log|\sigma_{s+t}(a)|). \label{logrep}
\end{align}
It follows from the Dirichlet's Units Theorem (see for example \cite{bs}) that $\mathrm{L}(O_{K}^{\ast,+})$ is a full lattice in the subspace $H$ of $\R^m$, where
$$
H:=\lf\{x\in\R^m:\;\sum\limits_{i=1}^mx_i=0\rt\}.
$$
For $t>0$, the projection $\mathrm{Pr}:\,H\longrightarrow \R^s$ given by the first $s$ coordinate functions is surjective. So there exists subgroups $G$ of rank $s$ of $O_{K}^{\ast,+}$ such that $\mathrm{Pr}\circ \mathrm{L}(G)$ is a full lattice $\Lambda$ in $\R^s$. Such a subgroup is called \emph{admissible} for the field $K$ by Oeljeklaus and Toma \cite{ot}.

Take $G$ admissible for $K$. The quotient manifold $(\h^s\times \C^t)/\sigma(O_K)$ is diffeomorphic to a trivial torus bundle $(\R_{+})^s\times(\s^1)^n$ and $G$ acts properly discontinuously on it because it induces a properly discontinuous action on $(\R_{+})^s$.
Therefore, we get an $m$-dimensional compact complex manifold
$$
M_{K,G}:=(\h^s\times\C^t)/(G\ltimes O_{K})
$$
which is a fiber bundle over $\torus^s:=\underbrace{\s^1\times\cdots\times \s^1}_{s-\mbox{times}}$ with $\torus^n:=\underbrace{\s^1\times\cdots\times \s^1}_{n-\mbox{times}}$ as fiber. Such a manifold is called \emph{Oeljeklaus-Toma (OT-) manifold}.

For $s=t=1,\,G=O_{K}^{\ast,+}$, $M_{K,G}$ is an Inoue-Bombieri surface $S_M$ (see \cite{inoue}).

In this paper, we will consider the OT-manifold in the case of $s>0,\,t=1$ (to prove similar results in the case when $t>1$, it seems
that new ideas will be required (see \cite{ovu,vuletescu})), that is, $M_K:=M_{K,G}=(\h^{m-1}\times \C)/\Gamma$, where $\Gamma:=(G\ltimes O_{K})$ and denote the quotient covering map by $\pi:\,\h^{m-1}\times \C\longrightarrow M_K$ and the fiber projection by
$p:\;M_K\longrightarrow \torus^{m-1}$.

Let
$$
z_i=x_i+\mn y_i,\quad i=1,\cdots m,
$$
where $z_1,\cdots,z_{m-1}$ be the standard coordinates of $\h^{m-1}$ and $z_m$ be the standard coordinate of $\C$.
Then we have some $\Gamma$-invariant forms on $\h^{m-1}\times\C$ which can be induced on $M_K$ and denoted by the same symbols as follows (see \cite{ot,ov}).
$$
\al:=\mn \sum\limits_{i=1}^{m-1}\frac{\md z_i\wedge \md \ov{z_i}}{4y_{i}^2},\;\beta:=\mn (y_1\cdots y_{m-1})\md z_m\wedge\md \ov{z_m},\; \gamma:=\mn\sum\limits_{k,\ell=1}^{m-1}\frac{\md z_{k}\wedge \md \ov{z_{\ell}}}{4y_ky_{\ell}}.
$$
In addition, $\al$ is $\md$-closed and also denoted by $\omega_{\infty}$ when it descends to $M_K$ in Section \ref{section1}.
Therefore, we can construct a Hermitian metric $\omega_{\mathrm{OT}}$ by
\be\label{otmetric}
\omega_{\mathrm{OT}}=\al+\beta+\gamma
\ee
with Ricci form
$$
\ric(\omega_{\mathrm{OT}})=-\al\in c_{1}^{\mathrm{BC}}(M_K).
$$
If we define a function $\psi(z)=(y_1\cdots y_{m-1})^{-1}$ on $\h^{m-1}\times \C$, then $\omega_{\mathrm{OT}}$ was defined in \cite{ot,ov}  to be
$$
\frac{\ddb (\psi(z)+|z_m|^2)}{\psi(z)}
$$
and a simple calculation shows that this equals to $\al+\beta+\gamma$.
In the case of Inoue-Bombieri surfaces,
$
\omega_{\mathrm{T}}=4\al+\beta
$
is called the Tricerri metric \cite{tricerri}.

Now we give more details about the holomorphic foliation $\F$ mentioned in Section \ref{section1}.
We begin with the holomorphic foliation $\ti\F$ without singularity on $\h^{m-1}\times \C$ generated by vector
field $\de_{z_m}$. The foliation $\ti\F$ is $\Gamma$-invariant and is also the kernel of the $\Gamma$-invariant form $\al$.
Therefore, it induces a holomorphic foliation $\F$ without singularity on $M_{K}$ with the kernel $\al=\omega_{\infty}$ (see \cite{ov}). A leaf of the
foliation $\ti\F$ including the point $(t_1,\cdots,t_m)\in \h^{m-1}\times\C$ is given as
$$
\ti{\mathcal{L}}_{t_1,\cdots,t_{m-1}}:=\{(z',\,z_m)\in \h^{m-1}\times\C:\;z'=(t_1,\cdots,t_{m-1})\in \h^{m-1}\}.
$$
Since the isotropy group of the leaf denoted by
$$
G_{\ti{\mathcal{L}}_{t_1,\cdots,t_{m-1}}}:=\{g\in \Gamma:\;g \ti{\mathcal{L}}_{t_1,\cdots,t_{m-1}}=\ti{\mathcal{L}}_{t_1,\cdots,t_{m-1}}\}.
$$
is trivial,
we get a leaf $\mathcal{L}$ of $\F$  via the natural immersion
of
$
\ti{\mathcal{L}}_{t_1,\cdots,t_{m-1}}/G_{\ti{\mathcal{L}}_{t_1,\cdots,t_{m-1}}}
$
diffeomorphic to
$
\ti{\mathcal{L}}_{t_1,\cdots,t_{m-1}}
$
into $M_K$.
All the leaves of $\F$ can be obtained in this way (see \cite{mm}). For  the closure $Z$ of a leaf $\mathcal{L}=\pi(\ti{\mathcal{L}}_{t_1,\cdots,t_{m-1}})$ of $\F$, Ornea and Verbitsky \cite[Proposition 3.2]{ov} proved that
$$
\pi^{-1}(Z)\supseteq Z_{\al_1,\cdots,\al_{m-1}}:=\{(z_1,\cdots,z_m)\in \h^{m-1}\times \C:\;\al_{i}=\mathcal{I}\mathrm{m} z_{i},\;1\leq i\leq m-1\},
$$
where $\al_i=\mathcal{I}\mathrm{m} t_{i},\,1\leq i\leq m-1$ and $Z_{\al_1,\cdots,\al_{m-1}}$ is the closure of $O_K(\ti{\mathcal{L}}_{t_1,\cdots,t_{m-1}})$. Therefore, we can deduce
\begin{lem}
For any point $a\in M_{K}$, the leaf $\mathcal{L}_{a}$ of the foliation $\F$ through this point is
dense in the $\torus^{m+1}$-fiber of the point $p(a)\in \torus^{m-1}$, that is, for any point $t=(t_1,\cdots,t_m)\in \h^{m-1}\times\C$, $\pi(\ti{\mathcal{L}}_{t_1,\cdots,t_{m-1}})$ is dense in the $\torus^{m+1}$-fiber of the point $p\circ\pi(t)\in \torus^{m-1}$.
\end{lem}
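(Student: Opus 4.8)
The plan is to establish the density by proving the two inclusions $\ov{\mathcal{L}_a}\subseteq p^{-1}(p(a))$ and $p^{-1}(p(a))\subseteq \ov{\mathcal{L}_a}$, where $\mathcal{L}_a=\pi(\ti{\mathcal{L}}_{t_1,\cdots,t_{m-1}})$ is the leaf, $\ov{\mathcal{L}_a}$ its closure in $M_K$, and $\al_i=\mathcal{I}\mathrm{m}\, t_i$ for $1\leq i\leq m-1$. The first inclusion is the elementary one. Recalling that the fibration $p$ is induced by the $\Gamma$-invariant map $z\mapsto (\log \mathcal{I}\mathrm{m}\, z_1,\cdots,\log \mathcal{I}\mathrm{m}\, z_{m-1})\bmod \Lambda$, I observe that along $\ti{\mathcal{L}}_{t_1,\cdots,t_{m-1}}$ the first $m-1$ coordinates are frozen at $t_1,\cdots,t_{m-1}$, so $p\circ\pi$ is constant and equal to $p(a)$ on the whole leaf. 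Hence $\mathcal{L}_a\subseteq p^{-1}(p(a))$, and since the fiber $p^{-1}(p(a))\cong\torus^{m+1}$ is compact, hence closed in $M_K$, taking closures gives $\ov{\mathcal{L}_a}\subseteq p^{-1}(p(a))$.

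For the reverse inclusion I would first identify the fiber explicitly as $p^{-1}(p(a))=\pi(Z_{\al_1,\cdots,\al_{m-1}})$. One direction, $\pi(Z_{\al_1,\cdots,\al_{m-1}})\subseteq p^{-1}(p(a))$, is immediate from the description of $p$. For the other, given $b\in p^{-1}(p(a))$ choose a lift $z\in \h^{m-1}\times\C$; then $(\log \mathcal{I}\mathrm{m}\, z_1,\cdots,\log \mathcal{I}\mathrm{m}\, z_{m-1})$ and $(\log \al_1,\cdots,\log \al_{m-1})$ differ by an element of $\Lambda=\mathrm{Pr}\circ \mathrm{L}(G)$, so there is $g\in G$ with $\mathrm{Pr}\circ\mathrm{L}(g)$ equal to this difference. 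Using $\sigma_i(g)>0$ for $1\leq i\leq m-1$, the multiplicative action of $g^{-1}$ scales $\mathcal{I}\mathrm{m}\, z_i$ back to $\al_i$, so $g^{-1}z\in Z_{\al_1,\cdots,\al_{m-1}}$ and $b=\pi(z)=\pi(g^{-1}z)\in\pi(Z_{\al_1,\cdots,\al_{m-1}})$, which proves the claimed identification.

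I would then conclude by invoking the Ornea-Verbitsky statement recalled just above, that $\pi^{-1}(\ov{\mathcal{L}_a})\supseteq Z_{\al_1,\cdots,\al_{m-1}}$ (with $Z=\ov{\mathcal{L}_a}$ the closure of the leaf). Applying the surjection $\pi$ and using $\pi(\pi^{-1}(\ov{\mathcal{L}_a}))=\ov{\mathcal{L}_a}$, this yields $\ov{\mathcal{L}_a}\supseteq \pi(Z_{\al_1,\cdots,\al_{m-1}})=p^{-1}(p(a))$. Combined with the first inclusion, $\ov{\mathcal{L}_a}=p^{-1}(p(a))$, i.e. the leaf is dense in the $\torus^{m+1}$-fiber. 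Equivalently, since $O_K\subseteq \Gamma$ collapses the orbit $O_K(\ti{\mathcal{L}}_{t_1,\cdots,t_{m-1}})$ onto $\mathcal{L}_a$ under $\pi$, continuity of $\pi$ together with $Z_{\al_1,\cdots,\al_{m-1}}=\ov{O_K(\ti{\mathcal{L}}_{t_1,\cdots,t_{m-1}})}$ give $\pi(Z_{\al_1,\cdots,\al_{m-1}})\subseteq\ov{\mathcal{L}_a}$ directly.

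Everything here is elementary once the Ornea-Verbitsky description of $Z_{\al_1,\cdots,\al_{m-1}}$ as the closure of the $O_K$-orbit of a leaf is granted. The step I expect to require the most care is the explicit identification $p^{-1}(p(a))=\pi(Z_{\al_1,\cdots,\al_{m-1}})$: it rests on the admissibility of $G$, i.e. that $\mathrm{Pr}\circ\mathrm{L}(G)=\Lambda$ is a \emph{full} lattice in $\R^{m-1}$ and that $\mathrm{Pr}\circ\mathrm{L}\colon G\to\Lambda$ is an isomorphism, together with the positivity $\sigma_i(g)>0$, so that a single slice $\{\mathcal{I}\mathrm{m}\, z_i=\al_i\}$ meets every $\Gamma$-orbit lying over $p(a)$ exactly along an $O_K$-orbit. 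Keeping track of this bookkeeping is the only genuine subtlety.
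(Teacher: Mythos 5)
Your proof is correct and follows essentially the same route as the paper, which simply cites Ornea--Verbitsky's description of $Z_{\al_1,\cdots,\al_{m-1}}$ as the closure of the $O_K$-orbit of a leaf and says ``therefore, we can deduce'' without writing out the deduction. You have supplied exactly the missing bookkeeping --- the identification $p^{-1}(p(a))=\pi(Z_{\al_1,\cdots,\al_{m-1}})$ via the admissibility of $G$, and the two inclusions between $\ov{\mathcal{L}_a}$ and the fiber --- and it is sound.
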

The following lemma shows that every Hermitian metric $\omega$ on $M_K$ is conformal to a Hermitian one which is strongly flat along the leaves.
\begin{lem}\label{lemleafflat}
A Hermitian metric $\omega_{\mathrm{LF}}$ on $m$-dimensional $M_{K}$ is flat along the leaves if and only if
\be\label{lemf1}
\al^{m-1}\wedge \omega_{\mathrm{LF}}=(p^{\ast}\eta)\al^{m-1}\wedge \beta,
\ee
where $\eta:\,\mathbb{T}^{m-1}=\underbrace{\s^1\times\cdots\times \s^1}_{(m-1)-\mbox{times}}\longrightarrow \R$ is a smooth positive function. And it is strongly flat along the leaves if and only if
\be\label{lemf2}
\al^{m-1}\wedge\omega_{\mathrm{LF}}=c\al^{m-1}\wedge\beta,
\ee
where $c>0$ is a constant independent of leaf. For any Hermitian metric $\omega$ on $M_K$, define $\sigma \in C^{\infty}(M_K,\,\R)$ by
$$
e^{\sigma}=\frac{\al^{m-1}\wedge \beta}{\al^{m-1}\wedge \omega}.
$$
Then $\omega_{\mathrm{LF}}=e^{\sigma}\omega$ satisfies (\ref{lemf2}) with $c=1$ and hence is strongly flat along the leaves.
\end{lem}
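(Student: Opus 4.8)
The plan is to reduce all three assertions to one linear-algebra computation: wedging the top power $\al^{m-1}$ against a Hermitian form extracts precisely its $\md z_m\wedge\md\ov{z_m}$-component. First I would fix a leaf $\ti{\mathcal{L}}_{t_1,\cdots,t_{m-1}}=\{z'\}\times\C$ and note that along it $\md z_i=\md\ov{z_i}=0$ for $i\le m-1$, so the restriction of $\pi^{\ast}\omega=\mn\sum_{i,j}g_{i\ov{j}}\md z_i\wedge\md\ov{z_j}$ to the leaf is exactly $\mn g_{m\ov m}(z',\cdot)\,\md z_m\wedge\md\ov{z_m}$, a conformal metric on $\C$. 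Such a metric is flat precisely when its conformal factor has harmonic logarithm, i.e. when $\log g_{m\ov m}$ is harmonic in $z_m$; this is the analytic reformulation of Definition \ref{defn}.

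Next I would compute the two $(m,m)$-forms. Since $\al$ involves only $z_1,\dots,z_{m-1}$, its top power $\al^{m-1}$ is a nonzero multiple of $\md z_1\wedge\md\ov{z_1}\wedge\cdots\wedge\md z_{m-1}\wedge\md\ov{z_{m-1}}$, so wedging with $\omega$ annihilates every term $g_{i\ov j}\md z_i\wedge\md\ov{z_j}$ except $i=j=m$. Hence
$$\al^{m-1}\wedge\omega=\mn g_{m\ov m}\,\al^{m-1}\wedge\md z_m\wedge\md\ov{z_m},\qquad \al^{m-1}\wedge\beta=\mn(y_1\cdots y_{m-1})\,\al^{m-1}\wedge\md z_m\wedge\md\ov{z_m},$$
so that the globally defined function $F:=(\al^{m-1}\wedge\omega)/(\al^{m-1}\wedge\beta)$ on $M_K$ equals $g_{m\ov m}/(y_1\cdots y_{m-1})$. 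This settles the easy parts. Along any leaf $y_1\cdots y_{m-1}$ is a positive constant, so the restriction equals $c(y_1\cdots y_{m-1})\mn\md z_m\wedge\md\ov{z_m}$ with a single constant $c$ for all leaves exactly when $F\equiv c$, which is (\ref{lemf2}); this proves the strongly-flat characterization. For the last assertion, $e^{\sigma}=F^{-1}$ is a smooth positive function on $M_K$, and $\al^{m-1}\wedge(e^{\sigma}\omega)=e^{\sigma}\,\al^{m-1}\wedge\omega=\al^{m-1}\wedge\beta$, so $\omega_{\mathrm{LF}}=e^{\sigma}\omega$ satisfies (\ref{lemf2}) with $c=1$ and is strongly flat along the leaves.

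The flat characterization (\ref{lemf1}) is where I expect the main obstacle. By the restriction computation, $\omega$ is flat along the leaves iff $\log g_{m\ov m}=\log F+\log(y_1\cdots y_{m-1})$ is harmonic in $z_m$ on every leaf; since $y_1\cdots y_{m-1}$ is leafwise constant, this is equivalent to $\log F$ being harmonic in $z_m$ on every leaf. Now $F$ is a smooth positive function on the compact manifold $M_K$, hence bounded above and below by positive constants, so $\log F$ is bounded; its restriction to a leaf is thus a bounded harmonic function on $\C$, which by Liouville's theorem must be constant. Therefore $F$ is constant along every leaf, and invoking the density lemma above — each leaf is dense in the torus fiber of $p$ — together with continuity of $F$, I conclude that $F$ is constant on each fiber of $p$ and so descends to a smooth positive $\eta\colon\torus^{m-1}\to\R$ with $F=p^{\ast}\eta$, which is exactly (\ref{lemf1}). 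The converse is immediate, since $F=p^{\ast}\eta$ forces $\log g_{m\ov m}$ to be constant, hence harmonic, along each leaf.

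The delicate point, and the reason the statement is phrased through $\al^{m-1}\wedge\omega$ rather than $g_{m\ov m}$ directly, is precisely this Liouville step: the component $g_{m\ov m}$ is not $\Gamma$-invariant and need not be bounded on $\h^{m-1}\times\C$, whereas the ratio $F$ is a genuine function on the compact $M_K$. It is this global boundedness, supplied by compactness, that lets one upgrade ``harmonic along the leaf'' to ``constant along the leaf,'' after which the density of leaves in the fibers does the rest.
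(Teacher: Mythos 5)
Your proposal is correct and follows essentially the same route as the paper: both reduce everything to the identity $\al^{m-1}\wedge\omega/\al^{m-1}\wedge\beta=g_{m\ov m}/(y_1\cdots y_{m-1})$, use Liouville's theorem on the bounded (because globally defined on compact $M_K$) leafwise-harmonic function $\log F$ to get leafwise constancy, and then invoke density of the leaves in the $\torus^{m+1}$-fibers to descend $F$ to $\torus^{m-1}$. Your closing remark about why boundedness must be extracted from $F$ rather than from $g_{m\ov m}$ is exactly the point the paper makes when it notes that $\log\frac{g_{m\ov m}}{y_1\cdots y_{m-1}}$ is $\Gamma$-invariant, hence bounded.
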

\begin{proof}
Write the pullback of the Hermitian metric $\omega_{\mathrm{LF}}$ as
$$
\pi^{\ast}\omega_{\mathrm{LF}}=\sum\limits_{i,j=1}^{m}g_{i\ov{j}}\md z_i\wedge\md\ov{z_j}
$$
and we have
\be\label{lemp1}
\frac{\al^{m-1}\wedge\pi^{\ast}\omega_{\mathrm{LF}}}{\al^{m-1}\wedge\beta}=\frac{g_{m\ov{m}}}{y_1\cdots y_{m-1}}.
\ee
So (\ref{lemf1}) is equivalent to
$$
\frac{g_{m\ov{m}}}{y_1\cdots y_{m-1}}=\pi^{\ast}p^{\ast}\eta.
$$
Notice that the function $\pi^{\ast}p^{\ast}\eta$ depends only on $(y_1,\cdots,y_{m-1})$. Since the restriction of $\pi^{\ast}\omega_{\mathrm{LF}}$ to a leaf $\{z'\}\times \C$ equals to $\mn g_{m\ov{m}}\md z_{m}\wedge\md \ov{z_{m}}$, and its Ricci curvature equals $-\de_{m}\de_{\ov{m}}\log g_{m\ov{m}}$, we can deduce that if (\ref{lemf1}) holds then $\omega_{\mathrm{LF}}$ is flat along the leaves.

Conversely, if $\omega_{\mathrm{LF}}$ is flat along the leaves, then for each fixed $z'\in \h^{m-1}$ we have that
$$
\de_{m}\de_{\ov{m}}\log\frac{g_{m\ov{m}}}{y_1\cdots y_{m-1}}=\de_{m}\de_{\ov{m}}\log  g_{m\ov{m}}=0.
$$
Thanks to (\ref{lemp1}) we get that the function $\log \frac{g_{m\ov{m}}}{y_1\cdots y_{m-1}}$ on $\h^{m-1}\times \C$ is $\Gamma$-invariant, hence bounded (because it is the pullback of a function from $M_K$). Therefore, $\log \frac{g_{m\ov{m}}}{y_1\cdots y_{m-1}}$ for $(z_1,\cdots,z_{m-1})\in \h^{m-1}$ fixed is a bounded harmonic function on $\C$, and so it must be constant. In other words, the ratio $\frac{\al^{m-1}\wedge\omega_{\mathrm{LF}}}{\al^{m-1}\wedge\beta}$ is constant along each leaf of $\F$. Since every leaf is dense in the $\torus^{m+1}$ fiber which contain it, we obtain that $\frac{\al^{m-1}\wedge\omega_{\mathrm{LF}}}{\al^{m-1}\wedge\beta}$ equals the pullback of a function from $\torus^{m-1}$.

On the other hand, it is now clear that $\omega$ is strongly flat along the leaves if and only if (\ref{lemf2}) holds, or equivalently,
$$
\frac{g_{m\ov{m}}}{y_1\cdots y_{m-1}}=c,
$$
where $c>0$ is a constant. The last assertion of the lemma is immediate.
\end{proof}
To end this section, we give some details about the Riemannian metric on $\torus^{m-1}$ induced from $\al$. On $\h^{m-1}$, $\al$
corresponds to the Riemannian metric
$$
\sum\limits_{i=1}^{m-1}\frac{\md x_i\otimes \md x_i+\md y_i\otimes \md y_i}{2y_{i}^2}
$$
which is restricted   on  $(\R_{+})^{m-1}$
\be\label{metricuse1}
\sum\limits_{i=1}^{m-1}\frac{\md y_i\otimes \md y_i}{2y_{i}^2}.
\ee
Under the local coordinate
$$
f:\;(\R_{+})^{m-1}\longrightarrow \R^{m-1},\quad (y_{1},\cdots,y_{m-1})\longmapsto(\log y_1,\cdots,\log y_{m-1}),
$$
the metric (\ref{metricuse1}) can be expressed as
\be\label{metricuse}
\frac{1}{2}\sum\limits_{i=1}^{m-1}\md x_i\otimes \md x_i.
\ee
Now let $a_1,\cdots,a_{m-1}$ be the generators of the admissible group $G$. Then under the logarithmic representation (\ref{logrep}),
$$
(\log \sigma_{1}(a_i),\cdots,\log\sigma_{m-1}(a_i))=:(v_{i1},\cdots,v_{i,m-1}),\quad i=1,\cdots,m-1
$$
is the basis of the full lattice $\Lambda$ in $\R^{m-1}$ and $\torus^{m-1}=\R^{m-1}/\Lambda$, where $\R^{m-1}$ is equipped with the metric (\ref{metricuse}).
So the metric on $\torus^{m-1}$ is
\be\label{metrictorus}
\frac{1}{2}\sum\limits_{k,\ell=1}^{m-1}\lf(\sum\limits_{i=1}^{m-1}v_{ki} v_{\ell i}\rt)\md x^k\otimes\md x^{\ell},
\ee
and the radius  of the $k$-th ($k=1,\cdots,m-1$) factor $\s^{1}$  of $\torus^{m-1}$ is
$$
\frac{1}{2\sqrt{2}\pi}\lf(\sum\limits_{i=1}^{m-1}v_{ki} v_{k i}\rt)^{1/2}.
$$
Obviously, the metric on $\torus^{m-1}$ depends on the lattice $\Lambda$. In fact, the metrics $g_{\Lambda}$ and $g_{\Lambda'}$ defined on $\torus^{m-1}$ are isometric if and only if there exists an isometry of $\R^{m-1}$ which sends the lattice $\Lambda$ on the lattice $\Lambda'$ (see \cite[Theorem 2.23]{ghl}).
\section{the Chern-Ricci flow on OT-manifolds}
We will write the normalized Chern-Ricci flow as a parabolic complex Monge-Amp\`{e}re equation.
Let $\omega_{\mathrm{LF}}=\mn\sum_{i,j=1}^m(g_{\mathrm{LF}})_{i\ov{j}}\md z_i\wedge \md\ov{z_j}$ be
the Hermitian metric which is strongly flat along the leaves, as in the setup of Theorem \ref{thm1}.
First, we define
\be\label{refmetric}
\ti\omega=\ti\omega(t)=e^{-t}\omega_{\mathrm{LF}}+(1-e^{-t})\al>0,
\ee
and denote by $\ti g$ the Hermitian metric associated to $\ti \omega$. We define a volume form $\Omega$ by
\be\label{vol}
\Omega=m\al^{m-1}\wedge\omega_{\mathrm{LF}}=mc\al^{m-1}\wedge\beta,
\ee
with the constant $c$ defined by (\ref{lemf2}). Direct calculation using (\ref{vol}) implies
$$
\ddb \log\Omega=\al.
$$
It follows that the normalized Chern-Ricci flow (\ref{crf}) is equivalent to the parabolic complex Monge-Amp\`{e}re equation
\be\label{cma}
\ddt \varphi=\log \frac{e^t(\ti\omega+\ddb\varphi)^m}{\Omega}-\varphi,\quad \ti\omega+\ddb\varphi>0,\quad\varphi(0)=\rho.
\ee
Namely, if $\varphi$ solves the equation (\ref{cma}), then $\omega(t)=\ti\omega+\ddb\varphi$ solves
the normalized Chern-Ricci flow (\ref{crf}), as is readily checked. Conversely, given a
solution $\omega(t)$ of (\ref{crf}) we can find a solution (see \cite{twjdg}) $\varphi=\varphi(t)$ of (\ref{cma})
such that $\omega(t)=\ti\omega+\ddb \varphi$.

Let $\varphi=\varphi(t)$ be the solution to (\ref{cma}) and write $\omega=\omega(t)=\ti\omega+\ddb\varphi$ for
the corresponding Hermitian metrics along the normalized Chern-Ricci flow (\ref{crf}).
We first prove uniform estimates on the potential $\varphi$ and its time derivative $\dot{\varphi}$.
Given the choice of $\ti\omega$ and $\Omega$, the proof is very similar to the one
in \cite[Lemma 3.6.3, Lemma 3.6.7]{swlecturenotes} (see also \cite{ftwz,fong,gill14,songtian,twymathann}).
\begin{lem}\label{ordinarylemma}
There exists a uniform positive constant $C$ such that on $M_K\times [0,\,\infty)$
\begin{enumerate}
  \item[(\romannumeral1)] $|\varphi|\leq C(1+t)e^{-t}$.
  \item[(\romannumeral2)] $|\dot{\varphi}|\leq C$.
  \item[(\romannumeral3)] $C^{-1}\ti\omega^m\leq \omega^m\leq C\ti\omega^m$.
\end{enumerate}
\end{lem}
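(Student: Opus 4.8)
The plan is to run the standard parabolic maximum--principle scheme for the normalized flow, exactly as in \cite{swlecturenotes,ftwz}, exploiting the two structural facts of our setup: $\ddb\log\Om=\al$, and the fact that the reference form $\ti\om$ collapses onto $\al$ at the sharp rate $e^{-t}$. The one preliminary I need is the volume asymptotics. Because $\al$ is degenerate along the leaves, $\al^m=0$; expanding $\ti\om^m=(e^{-t}\om_{\mathrm{LF}}+(1-e^{-t})\al)^m$ the lowest-order term is $e^{-t}(1-e^{-t})^{m-1}\Om$ (as $\Om=m\al^{m-1}\wedge\om_{\mathrm{LF}}$), the rest being $O(e^{-2t})$ multiples of the fixed positive $(m,m)$-forms $\om_{\mathrm{LF}}^{k}\wedge\al^{m-k}$; on the compact $M_K$ these are all $\leq C\Om$, so $e^{t}\ti\om^{m}/\Om=1+O(e^{-t})$ and hence $|\log(e^{t}\ti\om^{m}/\Om)|\leq Ce^{-t}$. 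Subtracting $\log(e^{t}\ti\om^{m}/\Om)$ from the Monge--Amp\`ere equation $\dot\varphi+\varphi=\log(e^{t}\om^{m}/\Om)$ yields the identity $\log(\om^{m}/\ti\om^{m})=\dot\varphi+\varphi-\log(e^{t}\ti\om^{m}/\Om)$, which together with the previous bound shows that, once (i) holds, (ii) and (iii) are equivalent; so it suffices to establish (i) and the bound $|\dot\varphi|\leq C$.

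For (i) I would apply the maximum principle to $\varphi$ itself. At a spatial maximum $\ddb\varphi\leq0$, so $\om^{m}\leq\ti\om^{m}$ and $\dot\varphi\leq\log(e^{t}\ti\om^{m}/\Om)-\varphi\leq Ce^{-t}-\varphi$; the reverse inequality holds at a spatial minimum. By Hamilton's trick $\tfrac{d}{dt}(e^{t}\varphi_{\max})\leq C$ and $\tfrac{d}{dt}(e^{t}\varphi_{\min})\geq-C$, and integrating these gives $|\varphi|\leq C(1+t)e^{-t}$.

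For $|\dot\varphi|\leq C$ I differentiate the flow. Using $\dot{\ti\om}=\al-\ti\om$ and $\tr_\om\ti\om=m-\Delta_\om\varphi$ one gets $(\ddt-\Delta_\om)\dot\varphi=1+\tr_\om\al-\tr_\om\ti\om-\dot\varphi$ and $(\ddt-\Delta_\om)(\dot\varphi+\varphi)=\tr_\om\al-(m-1)$. The upper bound is the easy half: set $H=\dot\varphi-A\varphi$. Since $\ti\om\geq(1-e^{-t})\al\geq c_{0}\al$ for $t\geq t_{0}$, we have $\tr_\om\al\leq c_{0}^{-1}\tr_\om\ti\om$, so taking $1+A\geq c_{0}^{-1}$ the two $\tr_\om\ti\om$-terms combine with a favorable sign and, using $|\varphi|\leq C$, one finds the pointwise inequality $(\ddt-\Delta_\om)H\leq C-(1+A)H$; the maximum principle then gives $H\leq C$, hence $\dot\varphi\leq C$.

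The hard direction is the lower bound $\dot\varphi\geq-C$, equivalently $\om^{m}\geq c\,\ti\om^{m}$, equivalently $\psi:=\dot\varphi+\varphi\geq-C$. Here $-\tr_\om\ti\om$ works against us, and in $(\ddt-\Delta_\om)\psi=\tr_\om\al-(m-1)$ the only useful term $\tr_\om\al$ is crudely bounded only by $\tr_\om\al\geq0$, which yields no more than the linearly decaying $\psi\geq-(m-1)t-C$. To upgrade this I would invoke the sharp arithmetic--geometric mean inequality $\tr_\om\al\geq(m-1)\big(m\,\al^{m-1}\wedge\om/\om^{m}\big)^{1/(m-1)}$, which couples $\tr_\om\al$ to the collapsing volume: where $\psi$ is very negative, $\om^{m}$ is small, forcing $\tr_\om\al$ to be large and turning $\psi$ into a supersolution, so a barrier argument closes the estimate. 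I expect the main obstacle to be the one delicate input this requires, namely a lower bound $\al^{m-1}\wedge\om\geq c\,\al^{m-1}\wedge\ti\om$ for the fiber volume (note $\al^{m-1}\wedge\ti\om=m^{-1}e^{-t}\Om$), which converts the above into $\tr_\om\al\geq(m-1)(c\,e^{-\psi})^{1/(m-1)}$; establishing this uses the one-dimensionality of the leaves and the sharp $e^{-t}$ collapse rate of $\ti\om$, following \cite{ftwz,swlecturenotes}.
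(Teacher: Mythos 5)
Your part (\romannumeral1), your upper bound for $\dot{\varphi}$, and your reduction of (\romannumeral3) to (\romannumeral1)+(\romannumeral2) all match the paper's argument (the paper phrases (\romannumeral1) as a maximum principle for $P=e^t\varphi-(C'+1)t$ rather than Hamilton's trick, and uses $\dot\varphi-(C_0-1)\varphi$ with $C_0\ti\omega>\al$ for the upper bound, but these are the same estimates). The problem is your lower bound for $\dot{\varphi}$, which is where the real content of the lemma lies, and there your argument has a genuine gap. You work with $\psi=\dot\varphi+\varphi$, for which the $\tr_\omega\ti\omega$ terms cancel exactly and only $\tr_\omega\al-(m-1)$ survives; to make $\tr_\omega\al$ coercive you then need the pointwise fiber-volume bound $\al^{m-1}\wedge\omega\geq c\,\al^{m-1}\wedge\ti\omega$, i.e.\ essentially $g_{m\ov{m}}\geq c\,e^{-t}$. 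You flag this as "the main obstacle" and defer it to the leaf structure, but it does not follow at this stage: writing $\al^{m-1}\wedge\omega=\al^{m-1}\wedge\ti\omega+\al^{m-1}\wedge\ddb\varphi$, the second term is controlled only in integral (by Stokes, since $\al$ is closed), not pointwise, and a pointwise lower bound on the fiber component of $\omega$ is in effect a piece of the full metric equivalence $\omega\geq C^{-1}\ti\omega$ — which in this paper is Theorem 3.2(\romannumeral1), proved \emph{after} and \emph{using} Lemma 3.1(\romannumeral3). So as written the step is circular, or at least unproved.

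The paper avoids this entirely by choosing the combination $\dot\varphi+2\varphi$ instead of $\dot\varphi+\varphi$: then
$$
\lf(\ddt-\Delta\rt)(\dot{\varphi}+2\varphi)\geq\tr_{\omega}\ti\omega+\dot{\varphi}-(2m-1),
$$
so a net $+\tr_\omega\ti\omega$ survives, and the \emph{full} arithmetic--geometric mean inequality
$$
e^{-\frac{\dot{\varphi}+\varphi}{m}}=\lf(\frac{\Omega}{e^t\omega^m}\rt)^{\frac{1}{m}}\leq C\lf(\frac{\ti\omega^m}{\omega^m}\rt)^{\frac{1}{m}}\leq \frac{C}{m}\tr_{\omega}\ti\omega
$$
(using your own volume asymptotic $e^t\ti\omega^m/\Omega=1+O(e^{-t})$) makes $\tr_\omega\ti\omega$ exponentially large wherever $\dot\varphi+\varphi$ is very negative. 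This requires no directional information about $\omega$ whatsoever — only the volume ratio, which is exactly what the Monge--Amp\`ere equation hands you. I would rewrite your last paragraph along these lines; the rest of the proof can stand.
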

\begin{proof}
Since the discussion are very similar to those in \cite{gill14,swlecturenotes,twymathann}, we will be brief.
For part (\romannumeral1), first we claim that, by the choice of $\ti\omega$ and $\Omega$, there holds
\be\label{varphibound}
\lf|e^t\log\frac{e^{t}\ti\omega^m}{\Omega}\rt|\leq C',
\ee
for uniform $C'$. Indeed,  from (\ref{refmetric}) and (\ref{vol}), we have
\begin{align}
\frac{e^{t}\ti\omega^m}{\Omega}
=&\frac{e^{t}\sum\limits_{k=0}^m
\binom{m}{k}(1-e^{-t})^k\al^ke^{-(m-k)t}\omega_{\mathrm{LF}}^{m-k}}{\Omega}\label{volequa}\\
=&\frac{m(1-e^{-t})^{m-1}\al^{m-1}\wedge\omega_{\mathrm{LF}}+e^{-t}\sum\limits_{k=0}^{m-2}
\binom{m}{k}(1-e^{-t})^ke^{-(m-2-k)t}\al^k\wedge\omega_{\mathrm{LF}}^{m-k}}{\Omega}\nonumber\\
=&1+O(e^{-t})\nonumber,
\end{align}
which implies (\ref{varphibound}). From now on, $O(f(t))$ will mean $\leq C f(t)$ for a uniform constant $C$, where $f(t)$ is a positive function of $t$ (e.g., $e^{-t},\,1,\,e^t$). Now consider the quantity
$$
P=e^t\varphi-(C'+1)t.
$$
If $\sup_{M_K\times[0,\,t_0]}P=P(x_0,\,t_0)$ for some $x_0\in M_K$ and $t_0>0$, we have at this point,
$$
0\leq \frac{\de P}{\de t}\leq e^t\log\frac{e^{t}\ti\omega^m}{\Omega}-C'-1\leq -1,
$$
absurd. Therefore, $\sup_{M_K}P$ is bounded from above by its initial value, which implies $\varphi\leq C(1+t)e^{-t}$. The lower bound is similar.

To prove (\romannumeral2), choose a constant $C_0$ satisfying $C_{0}\ti\omega>\al$ for all $t\geq 0$. Then compute, for the Laplacian $\Delta=g^{\ov{j}i}\de_{i}\de_{\ov{j}}$,
\begin{align*}
\lf(\ddt-\Delta\rt)(\dot{\varphi}-(C_0-1)\varphi)
=&1+\tr_{\omega}(\al-\ti\omega)-C_{0}\dot{\varphi}+(C_{0}-1)\tr_{\omega}(\omega-\ti\omega)\\
<&1-C_{0}\dot{\varphi}+m(C_{0}-1).
\end{align*}
The maximum principle implies that $\dot{\varphi}$ is bounded from above. For the lower bound of $\dot{\varphi}$,
\be\label{dotvarphilower}
\ba
\lf(\ddt-\Delta\rt)(\dot{\varphi}+2\varphi)
=&\tr_{\omega}(\al-\ti\omega)+1+\dot{\varphi}-2\tr_{\omega}(\omega-\ti\omega)\\
\geq&\tr_{\omega}\ti\omega+\dot{\varphi}-(2m-1).
\ea
\ee
By the geometric-arithmetic means inequality, we have
\be\label{agm}
e^{-\frac{\dot{\varphi}+\varphi}{m}}=\lf(\frac{\Omega}{e^t\omega^m}\rt)^{\frac{1}{m}}
\leq C\lf(\frac{\ti\omega^m}{\omega^m}\rt)^{\frac{1}{m}}\leq \frac{C}{m}\tr_{\omega}\ti\omega,
\ee
where we use (\ref{volequa}).
Combining (\ref{dotvarphilower}), (\ref{agm}) and the maximum principle indicates that $\dot{\varphi}$ is bounded from below.

Finally, (\romannumeral3) follows from (\romannumeral1), (\romannumeral2) and the equation (\ref{cma}).
\end{proof}
Next, we bound the torsion and curvature of the reference metrics $\ti g$. We will denote the Chern connection, torsion and curvature of $\ti g$
by $\ti\n,\,\ti T$ and $\widetilde{\mathrm{Rm}}$ respectively, and also write
$$
\ti T_{ij\ov{\ell}}=\ti T_{ij}^{k}\ti g_{k\ov{\ell}}=\de_{i}\ti g_{j\ov{\ell}}-\de_{j}\ti g_{i\ov{\ell}}.
$$
Since $\al$ is a closed form, we have
\be\label{Tti}
\ti T_{ij\ov{\ell}}=e^{-t}(T_{\mathrm{LF}})_{ij\ov{\ell}},
\ee
where $T_{\mathrm{LF}}$ is  the torsion of the metric $g_{\mathrm{LF}}$.  We can deduce the following bounds on torsion and curvature of $\ti g$, which are analogous to those in \cite[Lemma 4.1]{twymathann}.
\begin{lem}\label{keylemma}
There exists a uniform constant $C$ such that
\begin{enumerate}
  \item[(\romannumeral1)]$|\ti T|_{\ti g}\leq C$.
  \item[(\romannumeral2)]$|\ov{\de}\ti T|_{\ti g}+|\ti \n\ti T|_{\ti g}+|\widetilde{\mathrm{Rm}}|_{\ti g}\leq Ce^{t/2}$.
\end{enumerate}
\end{lem}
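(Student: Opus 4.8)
The plan is to reduce both parts to a bookkeeping of the powers of $e^{\pm t}$ produced by the two competing scales in $\ti\om=e^{-t}\om_{\mathrm{LF}}+(1-e^{-t})\al$. Since $\al$ involves only $z_1,\dots,z_{m-1}$, the metric splits into blocks: the $(m-1)\times(m-1)$ block $\ti g_{A\ov{B}}$ ($A,B<m$) is uniformly positive and $O(1)$, while every entry carrying the index $m$ is $O(e^{-t})$, namely $\ti g_{A\ov{m}},\ti g_{m\ov{B}},\ti g_{m\ov{m}}=O(e^{-t})$. A Schur-complement computation shows that in the inverse only $\ti g^{m\ov{m}}=O(e^{t})$ grows, while $\ti g^{A\ov{m}},\ti g^{m\ov{B}},\ti g^{A\ov{B}}=O(1)$. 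Two exchange rates then govern everything: first, any derivative of $\ti g$ in the $z_m$-direction gains a factor $e^{-t}$, because $\al$ is independent of $z_m$ (so $\de_m\ti g_{k\ov{\ell}}=e^{-t}\de_m(g_{\mathrm{LF}})_{k\ov{\ell}}$); second, passing from a $g_{\mathrm{LF}}$-unitary frame to a $\ti g$-unitary frame $\{E_i\}$ costs a factor $e^{t/2}$ for each index lying in the shrinking $z_m$-direction, since there $\ti g_{m\ov{m}}\sim e^{-t}(g_{\mathrm{LF}})_{m\ov{m}}$, i.e. $E_m\sim e^{t/2}\de_{z_m}$. All fixed quantities built from $g_{\mathrm{LF}}$ and $\al$ (their covariant derivatives, and $T_{\mathrm{LF}}$) are bounded because they are $\Gamma$-invariant and descend to the compact $M_K$, and the change of frame in the non-degenerate ($z_1,\dots,z_{m-1}$) directions between $\al$ and $g_{\mathrm{LF}}$ is bounded above and below by the same compactness.

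Part (i) is then immediate. By \eqref{Tti} we have $\ti T=e^{-t}T_{\mathrm{LF}}$, so $\ti T(E_i,E_j,\ov{E_\ell})=e^{-t}T_{\mathrm{LF}}(E_i,E_j,\ov{E_\ell})$. Because $T_{\mathrm{LF}}$ is antisymmetric in its first two slots, at most one of $i,j$ and then $\ell$ may equal $m$, so at most two indices lie in the $z_m$-direction; hence $T_{\mathrm{LF}}(E_i,E_j,\ov{E_\ell})=O(e^{t})$ and $\ti T(E_i,E_j,\ov{E_\ell})=O(1)$, giving $|\ti T|_{\ti g}\leq C$.

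For part (ii) I would estimate the curvature directly from $\widetilde{R}_{i\ov{j}k\ov{\ell}}=-\de_i\de_{\ov{j}}\ti g_{k\ov{\ell}}+\ti g^{p\ov{q}}\de_i\ti g_{k\ov{q}}\de_{\ov{j}}\ti g_{p\ov{\ell}}$, classifying components by the number $N$ of the indices $i,j,k,\ell$ equal to $m$. Using the block bounds and $\de_m\al=0$, one finds the coordinate component is $O(e^{-t})$ for $1\leq N\leq 3$; since in the $\ti g$-unitary frame such a component is weighted by $e^{Nt/2}$, it contributes $e^{Nt/2}\cdot O(e^{-t})\leq O(e^{t/2})$ precisely when $N\leq 3$. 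The single dangerous term is the fully degenerate one $\widetilde R_{m\ov{m}m\ov{m}}$ ($N=4$), weighted by $e^{2t}$. Here the assumption that $\om_{\mathrm{LF}}$ is \emph{strongly flat along the leaves} is decisive: by Lemma \ref{lemleafflat}, $\ti g_{m\ov{m}}=e^{-t}c\,y_1\cdots y_{m-1}$ is independent of $z_m$, so $\de_m\ti g_{m\ov{m}}=\de_{\ov{m}}\ti g_{m\ov{m}}=0$. This annihilates both $-\de_m\de_{\ov{m}}\ti g_{m\ov{m}}$ and the potentially $O(e^{t})$ term $\ti g^{m\ov{m}}\de_m\ti g_{m\ov{m}}\de_{\ov{m}}\ti g_{m\ov{m}}$, improving $\widetilde R_{m\ov{m}m\ov{m}}$ from $O(e^{-t})$ to $O(e^{-2t})$; then $e^{2t}\cdot O(e^{-2t})=O(1)$. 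Thus $|\widetilde{\mathrm{Rm}}|_{\ti g}\leq Ce^{t/2}$, the bound being sharp and attained on the $N=3$ components.

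The bounds for $|\ti\n\ti T|_{\ti g}$ and $|\ov{\de}\ti T|_{\ti g}$ follow the same scheme. Both inherit the overall factor $e^{-t}$ from $\ti T=e^{-t}T_{\mathrm{LF}}$, and the extra derivative is harmless: the Chern connection coefficients of $\ti g$ stay bounded in the relevant sense because the connection difference $\ti\n-\n^{g_{\mathrm{LF}}}$ only involves $\n^{g_{\mathrm{LF}}}\al$, which vanishes in the $z_m$-direction since $\al_{\cdot\ov{m}}=0$. Antisymmetry of the torsion again caps the number of $z_m$-indices at three, so $e^{3t/2}\cdot O(e^{-t})=O(e^{t/2})$. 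The main obstacle in the whole argument is exactly the degenerate curvature direction $\widetilde R_{m\ov{m}m\ov{m}}$: controlling it is what forces the strongly-flat-along-the-leaves normalization, while the remaining verification, that no other contraction exceeds the allowed power of $e^{t}$, is lengthy but routine.
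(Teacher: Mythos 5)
Your proposal is correct and follows essentially the same route as the paper: the same block asymptotics for $\ti g$ and $\ti g^{-1}$ (only $\ti g^{\ov{m}m}=O(e^t)$ grows), the skew-symmetry of the torsion to cap the number of degenerate indices in part (i), and the identity $\de_m\ti g_{m\ov{m}}=\de_{\ov{m}}\ti g_{m\ov{m}}=0$ coming from strong flatness along the leaves to kill the worst Christoffel and curvature contractions in part (ii). Your frame-weight bookkeeping (each $m$-index costing $e^{t/2}$) is just a repackaging of the paper's explicit contractions against powers of $\ti g^{\ov{m}m}$, and your order counts agree with the paper's in every case.
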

\begin{proof}
Denote by $\ti g_{k\ov{\ell}}$ the component of metric matrix $(\tilde{g}_{p\ov{q}})$ in $k$-th row and $\ell$-th column.
We have
\begin{align}
\ti g_{k\ov{k}}
=&e^{-t}(g_{\mathrm{LF}})_{k\ov{k}}+(1-e^{-t})\al_{k\ov{k}}=O(1),\quad 1\leq k\leq m-1,\no\\
\ti g_{k\ov{\ell}}
=&e^{-t}(g_{\mathrm{LF}})_{k\ov{\ell}}=O(e^{-t}),\quad \mbox{otherwise}.\nonumber
\end{align}
Denote by $\ti g^{\ov{\ell}k}$ the component of the inverse matrix of $(\ti g_{p\ov{q}})$ in $\ell$-th row and $k$-th column, and by $G_{k\ell}$ the algebraic cofactor of the component $\ti g_{k\ov{\ell}}$.
Note that
\begin{align*}
G_{mm}=&\ti g_{1\ov{1}}\cdots\ti g_{m-1\ov{m-1}}+\sum\mbox{terms with factor}\;e^{-t} \\
=&\al_{1\ov{1}}\cdots\al_{m-1\ov{m-1}}+\sum\mbox{terms with factor}\;e^{-t}
\end{align*}
and
$$
G_{k\ell}=\sum \mbox{terms with factor}\; e^{-t},\quad (k,\ell)\neq (m,m).
$$
A preliminary analysis implies that there exists a uniform constant $c_0$ independent of $t$ such that
\begin{align*}
\mathrm{det}(\ti g_{p\ov{q}})
=\sum\limits_{\ell=1}^m\ti g_{m\ov{\ell}}G_{m\ell}
=e^{-t}\sum\limits_{\ell=1}^m(g_{\mathrm{LF}})_{m\ov{\ell}}G_{m\ell}\geq c_0 e^{-t},
\end{align*}
where we also use the fact that $(g_{\mathrm{LF}})_{m\ov{m}}>0$.
Therefore, we can deduce that all the components of the inverse metric matrix  $(\ti g^{\ov{q}p})$ are bounded by $O(1)$ except that $\ti g^{\ov{m}m}
$ is bounded by $O(e^{t})$, where we use the formula $\ti g^{\ov{\ell}k}=\frac{G_{k\ell }}{\mathrm{det}(\ti g_{p\ov{q}})}$.

From (\ref{Tti}), we have
\be
|\ti T|_{\ti g}^2=e^{-2t}(T_{\mathrm{LF}})_{ik\ov{q}}\ov{(T_{\mathrm{LF}})_{j\ell \ov{p}}}\ti g^{\ov{j}i}\ti g^{\ov{\ell}k}\ti g^{\ov{q}p}\leq C,
\ee
since the only term involving the cube of $\ti g^{\ov{m}m}$ vanishes by the skew-symmetry of $(T_{\mathrm{LF}})_{ik\ov{q}}$ in $i$ and $k$, and by
the bounds of other components of $(\ti g^{\ov{\ell}k})$ all other terms are bounded.

Since if one of the indexes $1\leq i,\;k,\;\ell\leq m$ equals to $m$, we have
\be\label{use1}
\de_{i}\ti g_{k\ov{\ell}}=\de_{\ov{i}}\ti g_{k\ov{\ell}}=O(e^{-t}),
\ee
and
\be\label{use2}
\de_{m}\ti g_{m\ov{m}}=\de_{\ov{m}}\ti g_{m\ov{m}}=0,
\ee
where for (\ref{use2}) we uses the fact that $\ti g_{m\ov{m}}=ce^{-t}\lf(y_1\cdots y_{m-1}\rt)$,
we can bound on the $\ti g$ norm of the Christoffel symbols $\ti\Gamma_{ik}^p$ of the Chern connection of $\ti g$ by
\be\label{reviseduse1}
|\ti \Gamma_{ik}^p|_{\ti g}^2=\ti \Gamma_{ik}^p\ov{\ti \Gamma_{j\ell}^q}\ti g^{\ov{j}i}\ti g^{\ov{\ell}k}\ti g_{p\ov{q}}
=\ti g^{\ov{j}i}\ti g^{\ov{\ell}k}\ti g^{\ov{q}p}\de_{i}\ti g_{k\ov{q}}\de_{\ov{j}}\ti g_{p\ov{\ell}}\leq C.
\ee
Note that the quantity $|\ti \Gamma_{ik}^p|_{\ti g}^2$ is only locally defined.

Since
$$
(T_{\mathrm{LF}})_{im\ov{m}}=\de_{i}(g_{\mathrm{LF}})_{m\ov{m}}-\de_{m}(g_{\mathrm{LF}})_{i\ov{m}}=O(1),
$$
and
$$
\de_{\ov{m}}(T_{\mathrm{LF}})_{im\ov{m}}
=\de_{i}\de_{\ov{m}}(g_{\mathrm{LF}})_{m\ov{m}}-\de_{m}\de_{\ov{m}}(g_{\mathrm{LF}})_{i\ov{m}}
=-\de_{m}\de_{\ov{m}}(g_{\mathrm{LF}})_{i\ov{m}}
=O(1),
$$
we have, using the skew-symmetry of $(T_{\mathrm{LF}})_{ik\ov{q}}$ in $i$ and $k$,
$$
|(T_{\mathrm{LF}})_{ij\ov{r}}|_{\ti g}^2\leq Ce^{2t},\quad|\de_{\ov{\ell}}(T_{\mathrm{LF}})_{ij\ov{k}}|_{\ti g}^2\leq Ce^{3t}.
$$
Therefore,  from (\ref{Tti}) and the Cauchy-Schwarz inequality, we have
\begin{align}
|\dbar\ti T|_{\ti g}^2
=&|\ov{\n}\ti T|_{\ti g}^2
=e^{-2t}|\de_{\ov{\ell}}(T_{\mathrm{LF}})_{ij\ov{k}}-\ov{\ti\Gamma_{\ell k}^r}(T_{\mathrm{LF}})_{ij\ov{r}}|_{\ti g}^2\nonumber\\
\leq&2e^{-2t}|\de_{\ov{\ell}}(T_{\mathrm{LF}})_{ij\ov{k}}|_{\ti g}^2+2e^{-2t}|\ov{\ti\Gamma_{\ell k}^r}|_{\ti g}^2|(T_{\mathrm{LF}})_{ij\ov{r}}|_{\ti g}^2\nonumber\\
\leq&2e^{-2t}|\de_{\ov{\ell}}(T_{\mathrm{LF}})_{ij\ov{k}}|_{\ti g}^2+Ce^{-2t}|(T_{\mathrm{LF}})_{ij\ov{r}}|_{\ti g}^2\leq Ce^t.\nonumber
\end{align}
Similarly, we can deduce
$
|\n\ti T|_{\ti g}^2\leq Ce^{t}.
$

Recall that the curvature of the Chern connection of $\ti g$ is given by
$$
\ti R_{i\ov{j}k\ov{\ell}}=-\de_{i}\de_{\ov{j}}\ti g_{k\ov{\ell}}+\ti g^{\ov{q}p}\de_{i}\ti g_{k\ov{q}}\de_{\ov{j}}\ti g_{p\ov{\ell}}.
$$
For the bound of $|\ti R_{i\ov{j}k\ov{\ell}}|_{\ti g}^2$,  using (\ref{reviseduse1}), we just need to bound
$$
|\de_{i}\de_{\ov{j}}\ti g_{k\ov{\ell}}|_{\ti g}^2.
$$
Thanks to (\ref{use1}) and (\ref{use2}), we can obtain
\begin{align*}
|\de_{i}\de_{\ov{j}}\ti g_{k\ov{\ell}}|_{\ti g}^2
=& \sum\limits_{k,\ell=1}^{m-1}
\lf(\de_{m}\de_{\ov{m}}\ti g_{k\ov{m}}\rt)\lf(\de_{m}\de_{\ov{m}}\ti g_{m\ov{\ell}}\rt)\ti g^{\ov{m}m}\ti g^{\ov{m}m}\ti g^{\ov{m}m}\ti g^{\ov{\ell}k}\\
&+\mbox{terms bounded by constant}\leq Ce^t.
\end{align*}
So
$
|\widetilde{\mathrm{Rm}}|_{\ti g}^2\leq Ce^{t},
$
as required.
\end{proof}
Now we can apply the arguments of \cite{twymathann,twypreprint} to establish the estimates of the solution $\omega(t)$ to the normalized Chern-Ricci flow (\ref{crf}) and also the solution $\varphi(t)$ to the parabolic complex Monge-Amp\`{e}re equation (\ref{cma}).
\begin{thm}\label{thmuse}
For $\varphi=\varphi(t)$ solving (\ref{cma}) on $M_K$, there holds the following estimates.
\begin{enumerate}
\item[(\romannumeral1)] There exists a uniform constant $C$ such that
$$
C^{-1}\ti\omega\leq \omega(t)\leq C\ti\omega.
$$
\item[(\romannumeral2)] The Chern scalar curvature $R$ satisfies the bound
$$
-C\leq R\leq Ce^{t/2},
$$
where $C$ is uniform constant.
\item[(\romannumeral3)]For any $\eta\in (0,\,1/2)$ and $\sigma\in(0,\,1/4)$, there exists a constant $C_{\eta,\sigma}$ such that
$$
-C_{\eta,\sigma}e^{-\eta t}\leq \dot{\varphi}\leq C_{\eta,\sigma}e^{-\sigma t}.
$$
Therefore, combining part (\romannumeral1) in Lemma \ref{ordinarylemma} and this bound gives (taking $\eta=\sigma$)
$$
|\varphi+\dot{\varphi}|\leq C_{\sigma}e^{-\sigma t}.
$$
\item[(\romannumeral4)]For any $\varepsilon\in(0,\,1/8)$, there exists a constant $C_{\varepsilon}$ satisfying
$$
\|\omega-\ti\omega\|_{C^{0}(M_K,\,\omega_{0})}\leq C_{\varepsilon}e^{-\varepsilon t}.
$$
\end{enumerate}
\end{thm}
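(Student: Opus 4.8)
The plan is to establish the four estimates in the stated order, since each feeds into the next, with the second-order (Calabi-type) estimate (\romannumeral1) as the cornerstone. First I would prove the two-sided metric bound (\romannumeral1); then read off the Chern scalar curvature bounds (\romannumeral2) from it together with Lemma \ref{keylemma}; then run maximum-principle arguments for $\dot\varphi$ to get (\romannumeral3); and finally interpolate to obtain the $C^{0}$ decay (\romannumeral4). Throughout I would import the machinery of \cite{twymathann,twypreprint} (as in \cite{ftwz}), the role of Lemmas \ref{ordinarylemma} and \ref{keylemma} being precisely to supply the geometric input — bounded torsion and curvature growing no faster than $e^{t/2}$ — that those arguments require.

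For (\romannumeral1), the upper bound $\omega\leq C\ti\omega$ is the heart of the matter. I would apply the maximum principle to
$$
Q=\log\tr_{\ti\omega}\omega-A\varphi
$$
for a large uniform constant $A$, using the parabolic Schwarz-type inequality for $(\ddt-\Delta)\log\tr_{\ti\omega}\omega$ in the Hermitian setting, in which the torsion $\ti T$, its derivatives $\ov\de\ti T,\ti\n\ti T$, and the curvature $\widetilde{\mathrm{Rm}}$ of $\ti g$ all enter. Since $(\ddt-\Delta)(-A\varphi)=-A\dot\varphi+Am-A\tr_\omega\ti\omega$ by (\ref{cma}), the term $-A\tr_\omega\ti\omega$ is available to absorb the bad contributions. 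The lower bound $\omega\geq C^{-1}\ti\omega$ is then immediate: combining the upper bound with the volume equivalence $C^{-1}\ti\omega^m\leq\omega^m\leq C\ti\omega^m$ from Lemma \ref{ordinarylemma}(\romannumeral3) and diagonalizing $\omega$ with respect to $\ti\omega$ forces every eigenvalue to stay bounded below.

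For (\romannumeral2), I would use the flow to write $R=1-m-\dot H$, where $H:=\dot\varphi+\varphi=\log\frac{e^t\omega^m}{\Omega}$, and the identity (obtained directly from (\ref{crf}) and $\ddb\log\Omega=\al$)
$$
\lf(\ddt-\Delta\rt)H=1-m+\tr_\omega\al,
$$
which also drives (\romannumeral3). The lower bound $R\geq-C$ comes from the good sign of the reaction terms in the evolution of $R$ (equivalently of $H$), the torsion corrections being controlled by Lemma \ref{keylemma}(\romannumeral1); the upper bound $R\leq Ce^{t/2}$ comes from $R=\tr_\omega\ric(\omega)$ together with the metric equivalence (\romannumeral1) and the curvature bound $|\widetilde{\mathrm{Rm}}|_{\ti g}\leq Ce^{t/2}$ of Lemma \ref{keylemma}(\romannumeral2). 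For (\romannumeral3), the clean identity above, with $0\leq\tr_\omega\al\leq C$ and $\tr_\omega\al\to m-1$ (since $\omega\to\al$ and $\al$ degenerates in the collapsing $z_m$-direction), lets me trap $H$ and $\dot\varphi$ between barriers $\pm C_{\eta,\sigma}e^{-\lambda t}$ by the maximum principle; the admissible rates $\eta<1/2$, $\sigma<1/4$ are dictated by the $e^{t/2}$ curvature growth and by the rate at which $\tr_\omega\al\to m-1$. Finally (\romannumeral4) follows by interpolating the fast $C^{0}$ decay of $\varphi$ from Lemma \ref{ordinarylemma}(\romannumeral1) against the second- and third-order bounds (which grow at most like $e^{t/2}$), the loss in this interpolation producing the threshold $\varepsilon<1/8$.

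The main obstacle is the upper bound in (\romannumeral1). Because $\ti\omega$ collapses in the $z_m$-direction (the fibre of the foliation $\F$), its curvature blows up like $e^{t/2}$, so the naive Calabi estimate does not close with a uniform constant: choosing $A\geq\sup|\widetilde{\mathrm{Rm}}|_{\ti g}\sim e^{t/2}$ is illegitimate. The point, which is exactly what the arguments of \cite{twymathann,twypreprint} are built to exploit and why Lemma \ref{keylemma} is stated with the $e^{t/2}$ rate, is that the large components of $\widetilde{\mathrm{Rm}}$ and of $\ov\de\ti T,\ti\n\ti T$ all involve the collapsing index $m$ (recall $\ti g_{m\ov m}=ce^{-t}(y_1\cdots y_{m-1})$), and in the trace evolution they are paired with factors of $\ti g_{m\ov m}\sim e^{-t}$ that cancel the growth; after Cauchy--Schwarz the surviving terms are bounded and the negative term $-A\tr_\omega\ti\omega$ dominates, closing the estimate with a uniform $A$.
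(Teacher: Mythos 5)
The central gap is in your argument for part (\romannumeral1). You take $Q=\log\tr_{\ti\omega}\omega-A\varphi$ with a time-independent $A$ and claim that the $e^{t/2}$-sized contributions of $\widetilde{\mathrm{Rm}}$, $\ov{\de}\ti T$ and $\ti\n\ti T$ are cancelled by pairing with the small component $\ti g_{m\ov m}\sim e^{-t}$, leaving bounded terms. But that cancellation is exactly what is already spent in proving Lemma \ref{keylemma}: the pairing of the dangerous components with $\ti g^{\ov m m}\sim e^{t}$ is what produces the $e^{t/2}$ rate there, and no further gain is available afterwards. The paper's evolution inequality (\ref{x}) genuinely carries the term $Ce^{t/2}\tr_{\omega}\ti\omega$ on the right-hand side, which a fixed $-A\tr_{\omega}\ti\omega$ cannot absorb for large $t$. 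The paper closes the estimate with the Phong--Sturm quantity $Q=\log\tr_{\ti\omega}\omega-Ae^{t/2}\varphi+(e^{t/2}\varphi+\ti C)^{-1}$: since $|\varphi|\le C(1+t)e^{-t}$ by Lemma \ref{ordinarylemma}, the weight $e^{t/2}\varphi$ stays bounded, the term $-Ae^{t/2}\Delta\varphi$ supplies the needed $-Ae^{t/2}\tr_{\omega}\ti\omega$, and the reciprocal term produces the good gradient term that, combined with the first-order condition $\de_{\ov q}Q=0$ at the maximum, controls the torsion term in (\ref{x}). Without the $e^{t/2}$ weight your maximum principle does not close.

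Two further steps would fail as written. For the upper bound on $R$ in (\romannumeral2): $R$ is the Chern scalar curvature of the evolving metric $\omega(t)$, and the $C^0$ equivalence $\omega\sim\ti\omega$ together with $|\widetilde{\mathrm{Rm}}|_{\ti g}\le Ce^{t/2}$ gives no control on the two derivatives of $g$ entering $\ric(\omega)$. The paper instead writes $-\Delta u=R+\tr_{\omega}\al\ge R$ for $u=\varphi+\dot\varphi$ and runs a Cheng--Yau type argument: first $|\n u|^2_g\le Ce^{t/2}$ via the quantity $|\n u|^2/(A-u)+C_1(\tr_{\omega}\al+C_0\tr_{\ti\omega}\omega)$, which requires the parabolic Schwarz lemma for the locally defined holomorphic map to $\h^{m-1}$ and the evolution inequality for $\tr_{\ti\omega}\omega$, and then a second maximum principle for $-\Delta u+6|\n u|^2_g+C_1(\tr_{\omega}\al+C_0\tr_{\ti\omega}\omega)$. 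For (\romannumeral4), interpolating $|\varphi|\le C(1+t)e^{-t}$ against second- or third-order bounds of size $e^{t/2}$ yields at best boundedness of $\ddb\varphi$, not decay at rate $e^{-\varepsilon t}$ (and no uniform third-order bound is available outside the $\omega_{\mathrm{OT}}$ setting of Theorem \ref{thm3}); the paper instead proves the one-sided estimate $\tr_{\omega}\ti\omega-m\le Ce^{-\varepsilon t}$ by a maximum principle applied to $e^{\varepsilon t}(\tr_{\omega}\ti\omega-m)-e^{\delta t}\varphi$, combines it with the volume-ratio bound $\ti\omega^m/\omega^m\ge 1-C'e^{-\sigma t}$ coming from part (\romannumeral3), and concludes with the linear-algebra lemma of \cite[Lemma 2.6]{twypreprint}. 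Your outline of (\romannumeral3) via the identity $(\ddt-\Delta)(\varphi+\dot\varphi)=1-m+\tr_{\omega}\al$ is in the right spirit, but the decay rates there come from the $R$ bounds of (\romannumeral2) and the ODE $\ddt\dot\varphi=-R-(m-1)-\dot\varphi$, not from an a priori rate for $\tr_{\omega}\al\to m-1$, which is not yet known at that stage.
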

\begin{proof}
Given Lemma \ref{ordinarylemma} and Lemma \ref{keylemma}, the proof is almost identical to the discussion in \cite{twymathann,twypreprint}. Therefore, we give only a brief outline and point out the main differences.

For part (\romannumeral1), we claim that
\be\label{x}
\lf(\ddt-\Delta\rt)\log \tr_{\ti\omega}\omega\leq \frac{2}{(\tr_{\ti\omega}\omega)^2}\mathrm{Re}\lf(\ti g^{\ov{\ell}i}g^{\ov{q}k}
\ti T_{ki\ov{\ell}}\de_{\ov{q}}\tr_{\ti\omega}\omega\rt)+Ce^{t/2}\tr_{\omega}\ti\omega,\quad t\geq0.
\ee
Indeed, this inequality can be obtained by the argument which is almost identical to the one in \cite[Lemma 5.2]{twymathann}.
From \cite[Proposition 3.1]{twjdg} we have
\be\label{equlog}
\left( \ddt{} - \Delta \right) \log \mathrm{tr}_{\ti{\omega}} \omega =(I) + (II) + (III)- \frac{1}{\tr_{\ti{\omega}}\omega}\ti{g}^{\ov{\ell}p}\ti{g}^{\ov{q}k}\al_{p\ov{q}}g_{k\ov{\ell}}
\ee
where
$$
\ba
(I) = & \frac{1}{\tr_{\ti{\omega}}\omega} \bigg[-g^{\ov{j}p}g^{\ov{q}i}\ti{g}^{\ov{\ell}k}\ti{\nabla}_k g_{i\ov{j}}\ti{\nabla}_{\ov{\ell}}g_{p\ov{q}} +\frac{1}{\tr_{\ti{\omega}}\omega} g^{\ov{\ell}k}\de_k \lf(\tr_{\ti{\omega}}{\omega}\rt)\de_{\ov{\ell}}\lf(\tr_{\ti{\omega}}{\omega}\rt)\\
&\quad\quad\;\;
- 2\mathrm{Re}\left(g^{\ov{j}i}\ti{g}^{\ov{\ell}k} \ti{T}^p_{ki}\ti{\nabla}_{\ov{\ell}}g_{p\ov{j}}\right) -g^{\ov{j}i}\ti{g}^{\ov{\ell}k}\ti{T}^p_{ik}\ov{\ti{T}^q_{j\ell}}g_{p\ov{q}}\bigg] \\
(II) =& \frac{1}{\tr_{\ti{\omega}}\omega} \bigg[ g^{\ov{j}i}\ti{g}^{\ov{\ell}k}\lf(\ti{\nabla}_i\ov{\ti{T}^q_{j\ell}}-\ti{R}_{i\ov{\ell}p\ov{j}}\ti{g}^{\ov{q}p}\rt)g_{k\ov{q}}
\bigg] \\
(III) =& - \frac{1}{\tr_{\ti{\omega}}\omega} \bigg[  g^{\ov{j}i}\left(  \ti{\nabla}_i\ov{\ti{T}^{\ell}_{j\ell}}   \right) +
 \left(\ti{\nabla}_{\ov{\ell}} \ti{T}^p_{ik}\right)g^{\ov{j}i}\ti{g}^{\ov{\ell}k} \ti{g}_{p \ov{j}}- \ov{\ti{T}^q_{j\ell}} \ti{T}^p_{ik}g^{\ov{j}i}\ti{g}^{\ov{\ell}k} \ti{g}_{p \ov{q}}\bigg].
\ea
$$
Let us point out some differences from the calculation in \cite{twjdg}. Here $\omega$ is evolved by normalized Chern-Ricci flow (\ref{crf}) and our reference metric $\ti\omega$ also depends on time. In particular, in our case we have $T_{ij\ov{\ell}}=\ti T_{ij\ov{\ell}}$ (instead of $T_{ij\ov{\ell}}=(T_{0})_{ij\ov{\ell}}$ in \cite{twjdg}) and the metric $\hat{g}$ and $g_{0}$ in \cite{twjdg} are replaced by $\ti\omega$. The last term in $(\ref{equlog})$ comes from the $-\omega$ term on the right side of (\ref{crf}) and the time derivative of $\ti\omega$. Fortunately,  we have
$$
- \frac{1}{\tr_{\ti{\omega}}\omega}\ti{g}^{\ov{\ell}p}\ti{g}^{\ov{q}k}\al_{p\ov{q}}g_{k\ov{\ell}}\leq 0.
$$
Proposition 3.1 in \cite{twjdg} gives us
$$
(I)\leq  \frac{2}{\lf(\tr_{\ti{\omega}}\omega\rt)^2}\mathrm{Re}\left(
\ti{g}^{\ov{\ell}i}g^{\ov{q}k}  \ti{T}_{k i\ov{\ell}}  \de_{\ov{q}}\lf(\tr_{\ti{\omega}}\omega\rt)
  \right).
$$
Therefore, to complete the proof of the lemma, we only need to show
$$
(II)+(III)\leq Ce^{t/2}\tr_{\omega}\ti\omega.
$$
To see this, from (\romannumeral3) of Lemma \ref{ordinarylemma} and the geometric-arithmetic means inequality, we can deduce $\tr_{\ti\omega}\omega\geq C^{-1}$ for a uniform constant $C$, which, combining with $|g|_{\ti g}\leq \tr_{\ti\omega}\omega$ and
$|g^{-1}|_{\ti g}\leq \tr_{\omega}\ti\omega$, implies
\begin{align}
\frac{1}{\tr_{\ti{\omega}}{\omega}} \lf| g^{\ov{j}i}\ti{g}^{\ov{\ell}k}\ti{\nabla}_i\ov{\ti{T}^q_{j\ell}} g_{k\ov{q}} \rt|
\leq& \frac{1}{\tr_{\ti{\omega}}{\omega}} \lf| g^{-1} \rt|_{\ti{g}} \lf| \ti{g}^{-1}\rt|_{\ti{g}} | \ov{\ti{\nabla}} \ti{T}|_{\ti{g}} |g|_{\ti{g}} \leq  C (\tr_{\omega}{\ti{\omega}})e^{t/2},\nonumber\\
\frac{1}{\tr_{\ti{\omega}}{\omega}} \lf|g^{\ov{j}i}\ti{g}^{\ov{\ell}k}\ti{g}^{\ov{q}p}g_{k\ov{q}}\ti{R}_{i\ov{\ell}p\ov{j}}\rt|
\leq& \frac{1}{\tr_{\ti{\omega}}{\omega}} \lf| g^{-1} \rt|_{\ti{g}}\lf| \ti{g}^{-1}\rt|^2_{\ti{g}} |g|_{\ti{g}} | \widetilde{\textrm{Rm}}|_{\ti{g}} \leq  C(\tr_{\omega}{\ti{\omega}})e^{t/2},\nonumber\\
\frac{1}{\tr_{\ti{\omega}}{\omega}} \lf|g^{\ov{j}i}\ti{\nabla}_i   \ov{\ti{T}^\ell_{j\ell}}\rt |
\leq& \frac{1}{\tr_{\ti{\omega}}{\omega}} \lf| g^{-1} \rt|_{\ti{g}} | \ov{\ti{\nabla}} \ti{T}|_{\ti{g}}\leq C(\tr_{\omega}{\ti{\omega}}) e^{t/2},\nonumber\\
\frac{1}{\tr_{\ti{\omega}}{\omega}}\lf | g^{\ov{j}i}\ti{g}^{\ov{\ell}k}\ti{g}_{p \ov{j}}\ti{\nabla}_{\ov{\ell}}  \ti{T}^p_{ik}  \rt|
\leq&
\frac{1}{\tr_{\ti{\omega}}{\omega}} \lf| g^{-1} \rt|_{\ti{g}} \lf|\ti{g}^{-1}\rt|_{\ti{g}} \lf|\ti{g}\rt|_{\ti{g}} |\ov{\ti{\nabla}} \ti{T}|_{\ti{g}}\leq C(\tr_{\omega}{\ti{\omega}})e^{t/2},\nonumber\\
\frac{1}{\tr_{\ti{\omega}}{\omega}} \lf| g^{\ov{j}i}\ti{g}^{\ov{\ell}k} \ti{T}^p_{ik}\ov{\ti{T}^q_{j\ell}} \ti{g}_{p \ov{q}}\rt|
\leq &\frac{1}{\tr_{\ti{\omega}}{\omega}}
\lf| g^{-1} \rt|_{\ti{g}} \lf|\ti{g}^{-1}\rt|_{\ti{g}} \lf|\ti{g}\rt|_{\ti{g}} |\ti{T}|^2_{\ti{g}}\leq C(\tr_{\omega}{\ti{\omega}}),\nonumber
\end{align}
which completes the proof of the claim.

To prove part (\romannumeral1), first note that part (\romannumeral1) of Lemma \ref{ordinarylemma} implies that $e^{t/2}\varphi$ is uniformly bound. Using the idea from Phong-Sturm {\cite{phongsturm}}, we consider the quantity
$$
Q= \log \tr_{\ti{\omega} }{\omega} - A e^{t/2} \varphi + \frac{1}{e^{t/2}\varphi+\tilde{C} },
$$
where $\ti C$ is a constant satisfying $e^{t/2}\varphi+\tilde{C}\geq 1$ and $A$ is a large constant which will be determined later. Notice that
$$
0\leq \frac{1}{e^{t/2}\varphi+\tilde{C} }\leq 1.
$$
Since $\Delta \varphi=m-\tr_{\omega}\ti\omega$, using the bounds for $\varphi$ and $\dot{\varphi}$ in Lemma \ref{ordinarylemma}, we have
\begin{align}
\left( \ddt{} - \Delta \right) &\left( - A e^{t/2}  \varphi + \frac{1}{e^{t/2}\varphi+\tilde{C}}\right)\nonumber   \\
 =&- \left( A + \frac{1}{\lf(e^{t/2}\varphi+\tilde{C}\rt)^2} \right)  \lf(e^{t/2}\dot{\varphi}+\frac{1}{2}e^{t/2}\varphi \rt) \nonumber \\
 &+ \left( A + \frac{1}{\lf(e^{t/2}\varphi+\tilde{C}\rt)^2} \right) \Delta \lf(e^{t/2} \varphi\rt) - \frac{2 \lf| \partial \lf(e^{t/2}  \varphi \rt)\rt |_{g}^2}{\lf(e^{t/2}\varphi+\tilde{C}\rt)^3} \nonumber \\
 \le & CAe^{t/2}  - A e^{t/2} \tr_{\omega}{\ti{\omega}} - \frac{2 \lf| \partial\lf(e^{t/2}   \varphi \rt) \rt|_{g}^2}{\lf(e^{t/2}\varphi+\tilde{C}\rt)^3}.\label{eqnet2}
\end{align}
At the point $(x_0,\,t_0)$ with $t_{0}>0$ where $Q$ attains a maximum, we have $\de_{\ov{q}}Q=0$, implying
$$
\frac{\partial_{\ov{q}} \tr_{\ti{\omega}}{\omega}}{\tr_{\ti{\omega}}{\omega}} = \left( A + \frac{1}{\lf(e^{t_0/2}\varphi+\tilde{C}\rt)^2} \right)   e^{t_0/2}\partial_{\ov{q}} \varphi.
$$
Then at this point,
\begin{align}
&\frac{2}{(\tr_{\ti{\omega}}{\omega})^2} \textrm{Re} \left( \ti{g}^{\ov{\ell}i} g^{\ov{q}k} \ti{T}_{k i}^p\ti{g}_{p \ov{\ell}} \partial_{\ov{q}} \tr_{\ti{\omega}}{\omega} \right)\nonumber \\
=& \frac{2}{\tr_{\ti{\omega}}{\omega}} \textrm{Re} \left( \ti{g}^{\ov{\ell}i} g^{\ov{q}k} \ti{T}_{k i}^p\ti{g}_{p \ov{\ell}} \left( A + \frac{1}{\lf(e^{t_0/2}\varphi+\tilde{C}\rt)^2} \right)    e^{t_0/2} \partial_{\ov{q}} \varphi \right)\nonumber \\
\leq& \frac{CA^2}{(\tr_{\ti{\omega}}{\omega})^2} \lf(e^{t_0/2}\varphi+\tilde{C}\rt)^3  g^{\ov{q}k} \ti{T}_{ki}^i\overline{\ti{T}_{qr}^r} + \frac{\lf | \partial   \lf(e^{t_0/2}\varphi\rt)\rt |^2_g}{\lf(e^{t_0/2}\varphi+\tilde{C}\rt)^3} \nonumber\\
\le& \frac{CA^2}{\lf(\tr_{\ti{\omega}}{\omega}\rt)^2} \tr_{\omega}\ti\omega  + \frac{ \lf| \partial   \lf(e^{t_0/2}\varphi\rt)\rt |^2_g}{\lf(e^{t_0/2}\varphi+\tilde{C}\rt)^3},\label{dagger}
\end{align}
where for the last step we used Lemma \ref{ordinarylemma}, Lemma \ref{keylemma} and the Cauchy-Schwarz inequality for
the quantity
$
g^{\ov{q}k} \ti{T}_{ki}^i\overline{\ti{T}_{qr}^r}.
$
Combining \eqref{x}, \eqref{eqnet2} and \eqref{dagger}, we have, at a point $(x_0,\,t_0)$, for a uniform $C>0$,
\begin{align*}
\left( \ddt{} - \Delta \right) Q & \le  CA^2\tr_{\omega}{\ti{\omega}}  + C e^{t_0/2}  \tr_{\omega}{\ti{\omega}} + CA  e^{t_0/2}- A e^{t_0/2}  \tr_{\omega}{\ti{\omega}}\\
\leq&CA^2\tr_{\omega}{\ti{\omega}}+ C e^{t_0/2}  \tr_{\omega}{\ti{\omega}} + CA  e^{t_0/2}- A e^{t_0/2}  \tr_{\omega}{\ti{\omega}}
\end{align*}
where we are assuming, without loss of generality, that at this maximum point of $Q$ we have $\tr_{\ti\omega}\omega\geq 1$. Choose a uniform $A$ satisfying $A\geq C+1$. We can also assume $t_0>T_0$, where $T_0$ satisfies
$$
CA^2-e^{t/2}\leq -1,\quad \forall\; t\geq T_0.
$$
Then we can deduce at the maximum of $Q$
$$
\lf[\tr_{\omega}{\ti{\omega}}\rt](x_0,\,t_0)\leq \frac{CA  e^{t_0/2}}{e^{t_0/2}-CA^2}\in (CA,\,CA+C^2A^3),
$$
implying that $\lf[\tr_{\ti\omega}{\omega}\rt](x_0,\,t_0)$ is bounded from above, by using
part (\romannumeral3) of Lemma \ref{ordinarylemma} and the fact (see for example \cite[Corollary 3.5]{swlecturenotes})
\be\label{metricfact}
\tr_{\ti\omega}{{\omega}}
\leq \frac{1}{(m-1)!}\frac{{\omega}^m}{\ti\omega^m}\lf(\tr_{{\omega}}{\ti\omega}\rt)^{m-1}.
\ee
Combining the upper bound of $\lf[\tr_{\ti\omega}{\omega}\rt](x_0,\,t_0)$, the definition of $Q$ and part (\romannumeral1) of Lemma \ref{ordinarylemma} gives the uniform estimate
\be\label{uniformestiamte}
\tr_{\ti{\omega}}{\omega}\leq C
\ee
Again using (\ref{metricfact}) and (\romannumeral3) of Lemma \ref{ordinarylemma}, the uniform upper bound (\ref{uniformestiamte}) shows that $\omega$ is equivalent to $\ti\omega$.

As to part (\romannumeral2), we claim that there exists a uniform constant $C>0$ satisfying
\begin{align}
\left( \ddt{} - \Delta \right) \tr_{\ti{\omega}}{\omega}\leq& -C^{-1}|\ti{\nabla}g|^2_g+Ce^{t/2} \label{nice1}\\
\left( \ddt{} - \Delta \right) \tr_{\omega}{\al}\leq& |\ti{\nabla}g|^2_g-C^{-1}|\nabla \tr_{\omega}{\al}|^2_g+Ce^{t/2} \label{nice2}
\end{align}
along the normalized Chern-Ricci flow (\ref{crf}). As a result, there exists uniform positive constants $C_0$ and $C_1$ such that for all $t\geq 0$
\begin{equation}\label{nice3}
\left( \ddt{} - \Delta \right) (\tr_{\omega}{\al}+C_0\tr_{\ti{\omega}}{\omega})\leq -|\ti{\nabla}g|^2_g-C_1^{-1}|\nabla \tr_{\omega}{\al}|^2_g+C_1e^{t/2}.
\end{equation}
Indeed, the claim follows from the argument similar to the one in \cite[Lemma 6.2]{twymathann} (see also \cite{ftwz}).
From \cite[Proposition 3.1]{twjdg}, as the argument in the proof of inequality (\ref{equlog}), we have
$$
\left( \ddt{} - \Delta \right) \mathrm{tr}_{\ti{\omega}} \omega =J_1 + J_2 + J_3-\ti{g}^{\ov{\ell}p}\ti{g}^{\ov{q}k}\al_{p\ov{q}}g_{k\ov{\ell}}
$$
where
$$
\ba
J_1=&-g^{\ov{j}p}g^{\ov{q}i}\ti{g}^{\ov{\ell}k}\ti{\nabla}_k g_{i\ov{j}}\ti{\nabla}_{\ov{\ell}}g_{p\ov{q}}
- 2\mathrm{Re}\left(g^{\ov{j}i}\ti{g}^{\ov{\ell}k} \ti{T}^p_{ki}\ti{\nabla}_{\ov{\ell}}g_{p\ov{j}}\right)
 -g^{\ov{j}i}\ti{g}^{\ov{\ell}k}\ti{T}^p_{ik}\ov{\ti{T}^q_{j\ell}}g_{p\ov{q}} \\
J_2 =&   g^{\ov{j}i}\ti{g}^{\ov{\ell}k}\lf(\ti{\nabla}_i\ov{\ti{T}^q_{j\ell}}-\ti{R}_{i\ov{\ell}p\ov{j}}\ti{g}^{\ov{q}p}\rt)g_{k\ov{q}}\\
J_3 =& -   \bigg[  g^{\ov{j}i}\left(  \ti{\nabla}_i\ov{\ti{T}^{\ell}_{j\ell}}   \right) +
 \left(\ti{\nabla}_{\ov{\ell}} \ti{T}^p_{ik}\right)g^{\ov{j}i}\ti{g}^{\ov{\ell}k} \ti{g}_{p \ov{j}}- \ov{\ti{T}^q_{j\ell}} \ti{T}^p_{ik}g^{\ov{j}i}\ti{g}^{\ov{\ell}k} \ti{g}_{p \ov{q}}
\bigg].
\ea
$$
Using Lemma \ref{keylemma}, part (\romannumeral1) established right now and the Cauchy-Schwarz inequality, we get (\ref{nice1}).

The inequality (\ref{nice2}) comes from a parabolic Schwarz Lemma argument as in \cite{ftwz,songtian,yau}. A key difference is that we do not have a global holomorphic map from $M_K$ to a lower dimensional complex manifold. Fortunately, we have a locally defined holomorphic map $f$ from a holomorphic chart in $M_K$ to the cross product $\h^{m-1}$ of $m-1$ upper half planes with the property that $\al=f^{\ast}\omega_{\h}$, where
$\omega_{\h}$ is the multiple of the product of Poincar\'{e} metrics
$$
\omega_{\h}=\mn\sum\limits_{i=1}^{m-1}\frac{\md z_i\wedge\md\ov{z_i}}{4y_{i}^2}
$$
on $\h^{m-1}$. Since the parabolic Schwarz Lemma calculation is completely local, we can deduce the inequality (\ref{nice2}) exactly as in \cite {twymathann}.

Now we turn to the estimate of the bound of Chern scalar curvature $R$. First note that the minimum principle and the evolution equation of $R$
$$
\frac{\de R}{\de t}=\Delta R+|\mathrm{Ric}|^2+R
$$
imply the lower bound $R\geq -C$ directly. For the upper bound of $R$, we consider the quantity
$$
u:=\varphi+\dot{\varphi}
$$
with the property that $-\Delta u=R+\tr_{\omega}\al\geq R$. We want to bound $-\Delta u$ from above by $Ce^{t/2}$.
Using Cheng-Yau type argument in \cite{chengyau} (cf. \cite{sesumtian,songtian}) and applying the maximum principle to
$$
Q_1:=\frac{|\n u|^2}{A-u}+C_1(\tr_{\omega}\al+C_0\tr_{\ti\omega}\omega)
$$
for $A$ and $C_1$ chosen sufficiently large, we can deduce the estimate
$$
|\n u|_{g}^2\leq Ce^{t/2},
$$
exactly as in \cite[Proposition 6.3]{twymathann} (replacing $\omega_{S}$ with $\al$ wherever it occurs). A direct calculation gives
\begin{equation}\label{nice4}
\left( \ddt{} - \Delta \right)|\nabla u|^2_g\leq -\frac{1}{2}|\nabla \ov{\nabla} u|^2_g-|\nabla \nabla u|^2_g+
|\nabla \tr_{\omega}{\al}|^2_g+|\ti{\nabla}g|^2_g+Ce^t.
\end{equation}
On the other hand, we have
\begin{equation}\label{nice5}
\left( \ddt{} - \Delta \right)(-\Delta u)\leq 2|\nabla \ov{\nabla} u|^2_g-\Delta u+Ce^{t/2} +|\hat{\nabla} g|^2_g-C^{-1}|\nabla \tr_{\omega}{\al}|^2_g.
\end{equation}
Combining (\ref{nice3}), (\ref{nice4}) and (\ref{nice5}) implies, for $C_1$ large,
$$
\left( \ddt{} - \Delta \right)\lf(-\Delta u+6|\nabla u|^2_g+C_1(\tr_{\omega}{\al}+C_0\tr_{\ti{\omega}}{\omega})\rt)\leq -|\nabla \ov{\nabla} u|^2_g-\Delta u+Ce^t,
$$
and it follows from the maximum principle that $-\Delta u\leq Ce^{t/2}$, giving the upper bounded of the Chern scalar curvature
$$
R\leq Ce^{t/2}.
$$
Next, using the discussion in \cite[Lemma 6.4]{twymathann} (cf. \cite{swlecturenotes}), we can deduce the bound (\romannumeral3) on $\dot{\varphi}$  follows from the bounds on $R$, Lemma \ref{ordinarylemma} and the evolution equation
$$
\ddt\dot{\varphi}=-R-(m-1)-\dot{\varphi}.
$$
For (\romannumeral4), we first obtain, as in \cite[Lemma 7.3]{twymathann} (replacing $\omega_S$ with $\al$, whenever it occurs),
$$
\lf(\ddt-\Delta\rt)\tr_{\omega}\ti\omega\leq Ce^{t/2}-C^{-1}|\ti\n g|_g^2.
$$
Combining this and the bounds of $\varphi$ and $\dot{\varphi}$, we consider the quantity
$$
e^{\varepsilon t}(\tr_{\omega}\ti\omega-m)-e^{\delta t}\varphi
$$
for $0<\varepsilon<1/4$ and $\delta,\,\delta'>0$ chosen carefully. The maximum principle discussion of \cite[Proposition 7.3]{twymathann} (replacing $\tr_{\omega}\ti\omega-2$ with $\tr_{\omega}\ti\omega-m$, whenever it occurs) gives
\be\label{nice6}
\tr_{\omega}\ti\omega-m\leq Ce^{-\varepsilon t}.
\ee
On the other hand, there exists a uniform $T_I>0$ depending only on the initial data of $M_K$ such that there holds, for $t\geq T_I$,
\begin{align}
\frac{\ti\omega^m}{\omega^m}
=&\frac{e^t\ti\omega^m}{\Omega}\frac{\Omega}{e^t\omega^m}
=\frac{e^t\ti\omega^m}{\Omega}e^{-\varphi-\dot{\varphi}}\label{nice7}\\
=&e^{-\varphi-\dot{\varphi}}(1+O(e^{-t}))\nonumber\\
\geq&e^{-\varphi-\dot{\varphi}}-Ce^{-t}
\geq 1-C'e^{-\sigma t},\nonumber
\end{align}
where we use (\ref{volequa}), Lemma \ref{ordinarylemma} and the bound of $\varphi+\dot{\varphi}$ in part (\romannumeral3). From (\ref{nice6}) and (\ref{nice7}) we can apply \cite[Lemma 2.6]{twypreprint} (choose coordinates such that $\omega$ is the identity and $\ti\omega$ is given by matrix $A$ and take $\varepsilon=\sigma$ in (\ref{nice6}) and (\ref{nice7})) to get
$$
\|\omega-\ti\omega\|_{C^{0}(M_K,\,\omega)}\leq C_{\sigma}e^{-\sigma t/2}.
$$
Noting $\omega\leq C\omega_{0}$, we can deduce part (\romannumeral4) for $t\geq T_I$. Then we can modify the uniform constant such that part (\romannumeral4) holds for all $t\geq 0$.
\end{proof}
Using the estimate in Part (\romannumeral1) of Theorem \ref{thmuse}, we can get the Gromov-Hausdorff convergence of $\omega(t)$ in Theorem \ref{thm1}.
\begin{proof}[Proof of Theorem \ref{thm1}]
Note that $\omega_{\infty}=\al$ represents $-c_{1}^{\mathrm{BC}}(M_K)$, and by definition of the reference metric $\ti\omega(t)\longrightarrow \al$ uniformly and exponentially fast as $t\longrightarrow \infty$. Part (\romannumeral4) in Theorem \ref{thmuse} implies that the same convergence holds for $\omega(t)$.

Now we turn to determining the Gromov-Hausdorff limit of $(M_K,\omega(t))$.
Call
$$
F=p:\;M_{K}\longrightarrow \torus^{m-1}
$$
the projection map and denote by $T_a=F^{-1}(a)$ the $\torus^{m+1}$-fiber over $a\in \torus^{m-1}$.  Fix $\varepsilon>0$ and let $L_t$ be the length of a curve in $M_K$ measured with respect to metric $\omega(t)$ and $\md_t$ be the induced distance function on $M_K$.  Also denote by $L_{\infty}$ and $\md_{\infty}$ the length and distance function of the degenerate metric $\al$ on $M_K$ and by $L$ and $\md$ the length and distance function of the flat Riemannian  metric $g$ defined by (\ref{metrictorus}) on $\torus^{m-1}$. Define
$$
G:\;\torus^{m-1}\longrightarrow M_{K}
$$
by mapping every point $a\in\torus^{m-1}$ to some chosen point in $M_K$ on the fiber $T_a$. Note that the map $G$ will in general be discontinuous.

Clearly we have $F\circ G=\mathrm{Id}$, while $G\circ F$ is a fiber-preserving discontinuous map of $M_{K}$. In particular for any $a\in \torus^{m-1}$ we have trivially
\be\label{gh1}
\md (a,\,F\circ G(a))=0.
\ee
Since $\F$ is the kernel of $\al$ and each leaf of $\F$ is dense in a $\torus^{m+1}$-fiber, we conclude that
$\md_{\infty}(x,\,y)=0$ for all $x,\,y\in M_K$ with $F(x)=F(y)$.
Therefore, combining Part (\romannumeral1) in Theorem \ref{thmuse} and uniform convergence of $\tilde{\omega}(t)$ to $\al$ as $t\longrightarrow \infty$ implies that for any $x\in M_K$ and for all $t$ large enough we have
\be\label{gh2}
\md_{t}(x,\,G\circ F(x))\leq \varepsilon.
\ee
Now take any two points $x,\,y\in M_K$ and let $\gamma$ be a curve joining $x$ to $y$ with $L_t(\gamma)=\md_{t}(x,\,y)$. Then $F(\gamma)$ is a path in $\torus^{m-1}$ between $F(x)$ and $F(y)$. We claim that
$$
L(F(\gamma))\leq L_{\infty}(\gamma).
$$
Indeed, for any tangent vector $V$ on $M_K$, we can write it locally as
$$
V=\sum\limits_{i=1}^m\lf(X^i\de_{x_i}+Y^i\de_{y_i}\rt)
$$
and from  the definitions of $\omega_{\infty}$ and $g$ we see that
$$
|F_{\ast}V|_{g}^2=\sum\limits_{i=1}^{m-1}\frac{(Y^{i})^2}{2y_{i}^2}\leq\sum\limits_{i=1}^{m-1}\frac{(X^{i})^2+(Y^{i})^2}{2y_{i}^2}=|V|_{\omega_{\infty}}^2,
$$
where for convenience we use the local coordinates $y_1,\cdots,y_{m-1}$ on $\torus^{m-1}$ and write the flat Riemannian metric $g$ on $\torus^{m-1}$  of form in (\ref{metricuse1}).
Choosing $V=\dot{\gamma}$ implies the claim.
Therefore, given two points $x,\,y\in M_K$, letting $\gamma$ be a minimizing geodesic for the metric $\omega(t)$ joining them, we can get
\be\label{gh3}
\md(F(x),\,F(y))\leq L(F(\gamma))\leq L_{\infty}(\gamma)\leq L_{t}(\gamma)+\varepsilon=\md_{t}(x,\,y)+\varepsilon,
\ee
for all $t$ large. Obviously this also implies that
\be\label{gh4}
\md(a,\,b)=\md(F\circ G(a),\,F\circ G(b))\leq \md_{t}(G(a),\,G(b))+\varepsilon,
\ee
for all $a,\,b\in \torus^{m-1}$ and all $t$ large.

Lastly, given $x,\,y\in M_K$, let $\gamma$ be a minimizing geodesic in $\torus^{m-1}$ connecting $F(x)$ and $F(y)$, and let $\ti\gamma$ be a lift of the curve $\gamma$ starting at $x$, that is, $\ti\gamma$ is a curve in $M_K$ with $F(\ti\gamma)=\gamma$ and initial point $x$. This lift can always be constructed because $F=p$ is the bundle projection map. We then concatenate $\ti\gamma$ with a curve $\ti\gamma_{1}$ contained in the fiber $T_{F(y)}$ joining the endpoint of $\ti\gamma$ with $y$, and obtain a curve $\hat{\gamma}$ in $M_K$ joining $x$ and $y$.
We claim that
$$
L_{\infty}(\ti\gamma)=L(\gamma)=\md(F(x),\,F(y)).
$$
In fact we can construct a lift
$\tilde{\gamma}$ such that in local coordinates we have $\dot{\tilde{\gamma}}=\sum\limits_{i=1}^{m-1}Y^i(t)\de_{y_i}$,  as in \cite{twcomplexsurface}.
So we can obtain
$$
|\dot{\gamma}|_{g}^2=|F_{\ast}\dot{\ti\gamma}|_{g}^2=\sum\limits_{i=1}^{m-1}\frac{(Y^{i}(t))^2}{2y_{i}^2}=|\dot{\ti\gamma}|_{\omega_{\infty}}^2,
$$
as required. Therefore, we can conclude that
\be\label{gh5}
\md_{t}(x,\,y)\leq L_{t}(\hat{\gamma})=L_{t}(\ti\gamma)+L_t(\ti\gamma_1)\leq L_{\infty}(\ti\gamma)+2\varepsilon=\md(F(x),\,F(y))+2\varepsilon,
\ee
for all $t$ large. This also implies
\be\label{gh6}
\md_{t}(G(a),\,G(b))\leq \md(F\circ G(a),\,F\circ G(b))+2\varepsilon=\md(a,\,b)+2\varepsilon
\ee
for all $a,\,b\in \torus^{m-1}$.
Combining (\ref{gh1}), (\ref{gh2}), (\ref{gh3}), (\ref{gh4}), (\ref{gh5}) and (\ref{gh6}) implies that $(M_K,\,\omega(t))$ converges to $(\torus^{m-1},\,g)$ in the Gromov-Hausdorff sense.
\end{proof}
Now we turn to Theorem \ref{thm3}. With the special reference metric
$$
\ti \omega=e^{-t}\omega_{\mathrm{OT}}+(1-e^{-t})\al,
$$
we can get better estimates than the ones in Lemma \ref{keylemma} as follows.
\begin{lem}\label{specialkeylemma}
There exists a uniform constant $C$ such that
\begin{enumerate}\label{keylemma2}
  \item[(\romannumeral1)]$|\ti T|_{\ti g}\leq C$.
  \item[(\romannumeral2)]$|\ov{\de}\ti T|_{\ti g}+|\ti \n\ti T|_{\ti g}+|\widetilde{\mathrm{Rm}}|_{\ti g}\leq C$.
   \item[(\romannumeral3)]$|\ov{\ti\n\ti\n}\ti T|_{\ti g}+|\ti \n\ov{\ti \n}\ti T|_{\ti g}+|\ti \n\widetilde{\mathrm{Rm}}|_{\ti g}\leq C$.
\end{enumerate}
\end{lem}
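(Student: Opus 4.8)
The plan is to rerun the computation of Lemma~\ref{keylemma}, but to exploit the fact that with $\om_{\mathrm{OT}}=\al+\beta+\gamma$ every entry of $\ti g$ is an explicit monomial in $y_1,\dots,y_{m-1}$. First I would record the structure of $\ti\om=e^{-t}\om_{\mathrm{OT}}+(1-e^{-t})\al$. Since $\al$ and $\gamma$ involve only $dz_1,\dots,dz_{m-1}$ while $\beta$ involves only $dz_m$, the matrix $(\ti g_{i\ov j})$ is block diagonal: on the $(m-1)\times(m-1)$ block $\ti g_{k\ov\ell}=\tfrac{\delta_{k\ell}}{4y_k^2}+e^{-t}\tfrac{1}{4y_ky_\ell}$, which is comparable to $\al$ uniformly in $t$ and has inverse entries $\ti g^{\ov\ell k}=O(y_ky_\ell)$, while $\ti g_{m\ov m}=e^{-t}(y_1\cdots y_{m-1})$ and all mixed entries $\ti g_{i\ov m}$ ($i\le m-1$) vanish. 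Consequently $\ti g^{\ov mm}=O(e^{t})$, the function $\ti g_{m\ov m}$ depends on none of $x_1,\dots,x_m,y_m$ so that $\de_m\ti g_{m\ov m}=\de_{\ov m}\ti g_{m\ov m}=0$ as in (\ref{use2}), and no component of $\ti g$ depends on $z_m$ at all.

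I would then list the nonvanishing torsion components. Because the mixed block vanishes and nothing depends on $z_m$, one checks directly that $\ti T_{im\ov\ell}=0$ and $\ti T_{ij\ov m}=0$ for $i,j,\ell\le m-1$, so the only surviving entries of $\ti T_{ij\ov\ell}=e^{-t}(T_{\mathrm{OT}})_{ij\ov\ell}$ are $\ti T_{im\ov m}=-\tfrac{\mn}{2}e^{-t}\tfrac{y_1\cdots y_{m-1}}{y_i}$ $(1\le i\le m-1)$ coming from $\beta$, together with the purely block entries coming from $\gamma$, which carry a prefactor $e^{-t}$ and involve no $m$-index. This is already much cleaner than the general strongly-flat case, where the mixed terms $(g_{\mathrm{LF}})_{i\ov m}$ and the $z_m$-dependence need not vanish.

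The heart of the argument is a power-counting bookkeeping organized around a single balanced scaling: each contraction against $\ti g^{\ov mm}$ contributes $e^{t}/(y_1\cdots y_{m-1})$, whereas each factor of $\ti g_{m\ov m}$, of $\ti T_{im\ov m}$, or of the prefactor $e^{-t}$ in $\ti T=e^{-t}T_{\mathrm{OT}}$ contributes a compensating $e^{-t}(y_1\cdots y_{m-1})$; since every component is a homogeneous monomial in the $y_i$, the remaining block indices contract with $\ti g^{\ov\ell k}=O(y_ky_\ell)$ and the surviving $y$-powers cancel exactly. For instance $|\ti T_{im\ov m}|_{\ti g}^2$ pairs $e^{-2t}(y_1\cdots y_{m-1})^2/y_i^2$ against $\ti g^{\ov ii}(\ti g^{\ov mm})^2=O\!\big(y_i^2e^{2t}/(y_1\cdots y_{m-1})^2\big)$, giving $O(1)$; it is precisely this cancellation that upgrades the $O(e^{t/2})$ bounds of Lemma~\ref{keylemma} to the $O(1)$ bounds asserted here, since there is now no mismatched $y$-power to leave a net $e^{t/2}$. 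For part~(ii) I would apply the same count to $\de_{\ov p}\ti T_{im\ov m}$ (noting $\de_{\ov m}\ti T_{im\ov m}=0$), to the Christoffel symbols $\ti\Gamma^m_{im}=\ti g^{\ov mm}\de_i\ti g_{m\ov m}=O(1/y_i)$ that enter $\ov\de\ti T$ and $\ti\n\ti T$, and to the curvature component $\ti R_{i\ov jm\ov m}$, whose two contributions $\de_i\de_{\ov j}\ti g_{m\ov m}$ and $\ti g^{\ov mm}\de_i\ti g_{m\ov m}\de_{\ov j}\ti g_{m\ov m}$ are both $O\!\big(e^{-t}(y_1\cdots y_{m-1})/(y_iy_j)\big)$, hence $O(1)$ in $\ti g$-norm; the purely block curvature is just that of the Poincaré-type block and is uniformly bounded.

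Part~(iii) follows by differentiating once more. The only new feature is that products of connection terms—each homogeneous and $O(1)$ in $\ti g$-norm—now appear, but every such term again carries matched $e^{\pm t}$ and $y$-powers, so the balanced scaling is preserved and the norms stay $O(1)$. I expect the main obstacle to be exactly the combinatorial bookkeeping here: one must verify that among the many terms produced by $\ti\n\widetilde{\mathrm{Rm}}$, $\ov{\ti\n\ti\n}\ti T$ and $\ti\n\ov{\ti\n}\ti T$ no term acquires a net positive power of $e^{t}$. The conceptual reason this cannot happen, which I would make precise, is that all the relevant tensors are built from a metric whose geometry is invariant under the scaling generated by the fields $y_i\de_{y_i}$ (which have bounded $\ti g$-length), and the only $t$-dependence enters through $\ti g_{m\ov m}\sim e^{-t}$ in the single decoupled $z_m$-direction; hence every $\ti g^{\ov mm}$ is necessarily matched by a compensating $e^{-t}$, leaving a uniformly bounded quantity. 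Granting this, parts (i)--(iii) then follow exactly as the corresponding estimates in \cite{twymathann}.
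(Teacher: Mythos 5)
Your proposal is correct and follows essentially the same route as the paper: both exploit the explicit block-diagonal form of $\ti\omega=\al+e^{-t}(\beta+\gamma)$, the vanishing of the mixed components $\ti g_{i\ov m}$, the independence of $\ti g$ from $z_m$, and the fact that the derivatives $\de_i\ti g_{m\ov m}=O(e^{-t})$ exactly compensate each factor of $\ti g^{\ov mm}=O(e^{t})$ in every contraction, which is precisely why the $e^{t/2}$ losses of the general case disappear. The paper organizes this as a direct bound on $|\de\de\ti g|_{\ti g}$ and $|\de\de\de\ti g|_{\ti g}$ combined with Cauchy--Schwarz and the bound on $|\ti\Gamma|_{\ti g}$, while you phrase it as a matched $e^{\pm t}$ and $y$-homogeneity power count, but the underlying computation is the same.
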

\begin{proof}
For this new reference metric $\ti g$, we also have
\begin{align}
\ti g_{k\ov{k}}
=&e^{-t}(g_{\mathrm{OT}})_{k\ov{k}}+(1-e^{-t})\al_{k\ov{k}}=O(1),\quad 1\leq k\leq m-1,\no\\
\ti g_{k\ov{\ell}}
=&e^{-t}(g_{\mathrm{OT}})_{k\ov{\ell}}=O(e^{-t}),\quad \mbox{otherwise},\nonumber
\end{align}
and that all the components of the inverse metric matrix  $(\ti g^{\ov{\ell}k})$ are bounded by $O(1)$ except that $\ti g^{\ov{m}m}
$ is bounded by $O(e^{t})$. Part (\romannumeral1) was proved in Lemma \ref{keylemma}.

Since $|\ti \Gamma_{ik}^p|_{\ti g}^2\leq C$, that is, the $\ti g$ norm of the first order derivatives of $\ti g$ is bounded, using the Cauchy-Schwarz inequality, to complete  the proof of the rest of the lemma, we just need to bound the $\ti g$ norm of the second and third order derivatives of the new reference metric $\ti g$.
Since
\be\label{use3}
\de_{m}\ti g_{k\ov{\ell}}=\de_{\ov{m}}\ti g_{k\ov{\ell}}=0,\quad 1\leq  k,\,\ell\leq m
\ee
and
\be\label{use4}
\de_{i}\ti g_{m\ov{m}}=\de_{\ov{i}}\ti g_{m\ov{m}}=O(e^{-t}),\quad \ti g_{i\ov{m}}=\ti g_{m\ov{i}}=0,\quad 1\leq i\leq m-1,
\ee
we can deduce
\begin{align*}
|\de_{i}\de_{\ov{j}}\ti g_{k\ov{\ell}}|_{\ti g}^2
=&\sum\limits_{i,j,k,\ell,r,s,p,q=1}^{m-1}\lf(\de_{i}\de_{\ov{j}}\ti g_{k\ov{\ell}}\rt)\lf(\de_{s}\de_{\ov{r}}\ti g_{q\ov{p}}\rt)
\ti g^{\ov{r}i}\ti g^{\ov{j}s}\ti g^{\ov{p}k}\ti g^{\ov{\ell}q}\\
&+\sum\limits_{i,j,r,s=1}^{m-1}\lf(\de_{i}\de_{\ov{j}}\ti g_{m\ov{m}}\rt)\lf(\de_{s}\de_{\ov{r}}\ti g_{m\ov{m}}\rt)
\ti g^{\ov{r}i}\ti g^{\ov{j}s}\ti g^{\ov{m}m}\ti g^{\ov{m}m}\leq C
\end{align*}
and
\begin{align*}
|\de_{a}\de_{i}\de_{\ov{j}}\ti g_{k\ov{\ell}}|_{\ti g}^2
=&\sum\limits_{a,b,i,j,k,\ell,r,s,p,q=1}^{m-1}\lf(\de_{a}\de_{i}\de_{\ov{j}}\ti g_{k\ov{\ell}}\rt)\lf(\de_{\ov{b}}\de_{s}\de_{\ov{r}}\ti g_{q\ov{p}}\rt)
\ti g^{\ov{b}a}\ti g^{\ov{r}i}\ti g^{\ov{j}s}\ti g^{\ov{p}k}\ti g^{\ov{\ell}q}\\
&+\sum\limits_{a,b,i,j,r,s=1}^{m-1}\lf(\de_{a}\de_{i}\de_{\ov{j}}\ti g_{m\ov{m}}\rt)\lf(\de_{\ov{b}}\de_{s}\de_{\ov{r}}\ti g_{m\ov{m}}\rt)
\ti g^{\ov{b}a}\ti g^{\ov{r}i}\ti g^{\ov{j}s}\ti g^{\ov{m}m}\ti g^{\ov{m}m}\leq C,
\end{align*}
as required.
\end{proof}
The estimates in Theorem \ref{thmuse} imply that the special reference metric $\ti g$ is equivalent to the solution metric $g$ uniformly. The other key ingredient of the proof of Theorem \ref{thm3} is the following Calabi-type ``third order" estimate.
\begin{prop}
For the normalized Chern-Ricci flow (\ref{crf}) with the reference metric
$$
\ti \omega=e^{-t}\omega_{\mathrm{OT}}+(1-e^{-t})\al,
$$
we have
\be\label{3rd}
|\ti \n g|_{\ti g}\leq C.
\ee
\end{prop}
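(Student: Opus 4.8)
The plan is to prove (\ref{3rd}) by a maximum-principle argument applied to the Calabi-type quantity
$$
S:=|\ti\n g|_{g}^2,
$$
the Hermitian analogue of the Calabi third-order density, combined with the trace quantity $\tr_{\ti\omega}\omega$ already controlled in part (\romannumeral1) of Theorem \ref{thmuse}. Since by that part $g$ and $\ti g$ are uniformly equivalent, $S$ is comparable to $|\ti\n g|_{\ti g}^2$, so a uniform bound on $S$ is exactly (\ref{3rd}).

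First I would compute the evolution inequality for $S$ along the flow, following the computation of Tosatti-Weinkove \cite{twypreprint} (cf. \cite{twymathann}), adapted to the present reference metric $\ti\omega=e^{-t}\omega_{\mathrm{OT}}+(1-e^{-t})\al$. Schematically, the Laplacian term produces the good negative pieces $-c\lf(|\ti\n\ti\n g|_{g}^2+|\ov{\ti\n}\ti\n g|_{g}^2\rt)$, together with error terms that are quadratic and bilinear in $\ti T,\ \ov{\de}\ti T,\ \ti\n\ti T,\ \widetilde{\mathrm{Rm}}$ and their first covariant derivatives $\ov{\ti\n\ti\n}\ti T,\ \ti\n\ov{\ti\n}\ti T,\ \ti\n\widetilde{\mathrm{Rm}}$, each contracted against factors of $\ti\n g$ or $\ti\n\ti\n g$. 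The decisive point is that, for this reference metric, Lemma \ref{specialkeylemma} gives uniform (time-independent) bounds on all of these torsion and curvature quantities, \emph{including} the third-order ones in part (\romannumeral3). Consequently every error term — which in the general situation of Lemma \ref{keylemma} carried factors of $e^{t/2}$ or $e^{t}$ — is now bounded by a uniform constant, and after a Cauchy-Schwarz splitting that throws the second-derivative factors into the good negative terms one is left with the clean inequality
$$
\lf(\ddt-\Delta\rt)S\leq -c\lf(|\ti\n\ti\n g|_{g}^2+|\ov{\ti\n}\ti\n g|_{g}^2\rt)+C(1+S),
$$
for uniform $c,C>0$.

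The remaining obstruction is the term $+CS$, which is not controlled by the negative terms above; I would absorb it using $\tr_{\ti\omega}\omega$. With the improved bounds of Lemma \ref{specialkeylemma}, the inequality (\ref{nice1}) now reads $\lf(\ddt-\Delta\rt)\tr_{\ti\omega}\omega\leq -C_0^{-1}S+C_0$ with uniform constants, the former $Ce^{t/2}$ being replaced by a constant. Hence for the test function $Q:=S+A\,\tr_{\ti\omega}\omega$ with $A$ chosen so large that $AC_0^{-1}\geq C+1$, I obtain $\lf(\ddt-\Delta\rt)Q\leq -S+C'$ for a uniform $C'$. At an interior maximum point $(x_0,t_0)$ of $Q$ on $M_K\times[0,T]$ with $t_0>0$ the left-hand side is nonnegative, forcing $S(x_0,t_0)\leq C'$; combined with the uniform bound on $\tr_{\ti\omega}\omega$ from Theorem \ref{thmuse} this bounds $Q(x_0,t_0)$, hence $Q$, uniformly. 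Comparing with the initial value $Q(\cdot,0)$, finite since $\varphi(0)=\rho$ is smooth, then gives $S\leq C$ on all of $M_K\times[0,\infty)$, which is (\ref{3rd}).

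The main obstacle I anticipate is purely the bookkeeping in the evolution computation for $S$: one must verify that the third-order commutator terms, in which $\ti\n$ fails to commute and which bring in $\ti\n\widetilde{\mathrm{Rm}}$ and $\ti\n\ov{\ti\n}\ti T$, are exactly those controlled by part (\romannumeral3) of Lemma \ref{specialkeylemma}, and that each surviving error term can be split by Cauchy-Schwarz so that its second-derivative factor is absorbed into $-c\lf(|\ti\n\ti\n g|_{g}^2+|\ov{\ti\n}\ti\n g|_{g}^2\rt)$ with only a harmless $C(1+S)$ remaining. Granting the earlier lemmas, no genuinely new idea is required beyond transcribing the Tosatti-Weinkove Calabi estimate to this reference metric and exploiting the fact that \emph{all} the curvature and torsion bounds are now uniform in time rather than growing like $e^{t/2}$.
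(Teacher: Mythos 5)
Your proposal follows essentially the same route as the paper: the Sherman--Weinkove Calabi quantity $S=|\ti\n g|_g^2=|\Psi|_g^2$ with $\Psi_{ij}^k=\Gamma_{ij}^k-\ti\Gamma_{ij}^k$, the uniform (rather than $e^{t/2}$-growing) torsion and curvature bounds of Lemma \ref{specialkeylemma}, and a maximum principle applied to $S+A\,\tr_{\ti\omega}\omega$ using the improved version of (\ref{nice1}); the paper simply delegates this scheme to \cite[Remark 3.1]{shermanweinkove}.

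One point in your error accounting deserves attention. Because the reference metric satisfies $\ddt\ti g_{k\ov q}=\al_{k\ov q}-\ti g_{k\ov q}$, the evolution of $S$ acquires a term not present in the fixed-reference-metric computation, namely
$$
-2\mathrm{Re}\lf(g_{i\ov{r}}g^{\ov{u}j}g^{\ov{v}k}\ti g^{\ov{q}i}\,\ti\n_{j}\al_{k\ov{q}}\,\ov{\Psi_{uv}^r}\rt),
$$
which is linear in $\ti\n\al$ and in $\Psi$ and is \emph{not} covered by the list of torsion/curvature quantities you enumerate, nor by Lemma \ref{specialkeylemma}. It is harmless --- of order $O(\sqrt{S})$, hence absorbed into your $C(1+S)$ --- but only after one separately verifies $|\ti\n\al|_{\ti g}\leq C$, which is not automatic since $\ti g^{\ov{m}m}$ grows like $e^{t}$; the paper proves this claim by exploiting that $\al$ has no components in the $z_m$ direction and that $\de_m\al_{i\ov j}=\de_{\ov m}\al_{i\ov j}=0$, so no unbounded contractions with $\ti g^{\ov m m}$ occur. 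With that one additional verification inserted, your argument goes through exactly as written.
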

\begin{proof}
This Calabi-type estimate is very similar to the ones established in \cite{shermanweinkove} and \cite[Section 8]{twymathann} (see also \cite{phongsesumsturm}), so we give only a brief outline, pointing out the main differences.
We consider the quantity $S:=|\ti\n g|_{g}^2=|\Psi|_{g}^2$, where $\Psi_{ij}^k=\Gamma_{ij}^k-\ti\Gamma_{ij}^k$ as in \cite{shermanweinkove}. The quantity $S$ is equivalent to $|\ti \n g|_{\ti g}^2$ because $g$ is equivalent to $\ti g$.

Compared to the setup in \cite{shermanweinkove}, here we consider the normalized Chern-Ricci flow and the reference metric $\ti g$ now depends on time $t$, while the reference metric $\hat{g}$ in \cite{shermanweinkove} is fixed. Combining these differences and the calculation in \cite{shermanweinkove}, we observe that there is one new term in the evolution of the quantity $S$ of the form
\be\label{newterm}
-2\mathrm{Re}\lf(g_{i\ov{r}}g^{\ov{u}j}g^{\ov{v}k}\ti g^{\ov{q}i}\ti\n_{j}\al_{k\ov{q}}\ov{\Psi_{uv}^r}\rt).
\ee
We claim that
\be\label{nice8}
|\ti\n \al|_{\ti g}\leq C.
\ee
Indeed, since
$
|\ti\Gamma_{ij}^k|_{\ti g}^2\leq C
$
and
$$
|\al|_{\ti g}^2=\sum\limits_{i,j,k,\ell=1}^{m-1}\ti g^{\ov{j}i}\ti g^{\ov{\ell}k}\al_{i\ov{\ell}}\al_{k\ov{j}}
=\sum\limits_{i,j=1}^{m-1}\ti g^{\ov{j}i}\ti g^{\ov{i}j}\al_{i\ov{i}}\al_{j\ov{j}}\leq C,
$$
from the Cauchy-Schwarz inequality, to prove (\ref{nice8}), it is enough to bound $\de_{i}\al_{j\ov{\ell}}$.
Noting that
$$
\de_{m}\al_{i\ov{j}}=\de_{\ov{m}}\al_{i\ov{j}}=\al_{m\ov{\ell}}=\al_{\ell \ov{m}}=0, \quad 1\leq i,\,j \leq m-1,\;1\leq \ell\leq m,
$$
we have
\begin{align*}
|\de_{i}\al_{j\ov{\ell}}|_{\ti g}^2
=&\sum\limits_{i,j,k,\ell,p,q=1}^{m-1}\ti g^{\ov{j}i}\ti g^{\ov{q}p}\ti g^{\ov{\ell}k}(\ti\de_{i}\al_{k\ov{q}})(\ti\de_{\ov{j}}\al_{p\ov{\ell}})
=\sum\limits_{i,j,k,\ell=1}^{m-1}\ti g^{\ov{j}i}\ti g^{\ov{k}\ell}\ti g^{\ov{\ell}k}(\ti\de_{i}\al_{k\ov{k}})(\ti\de_{\ov{j}}\al_{\ell\ov{\ell}})\leq C,\\
\end{align*}
as required.
Therefore, the new term (\ref{newterm}) is of the order $O(\sqrt{S})$ and harmless.
Combining  \cite[Remark 3.1]{shermanweinkove} and the estimates in Lemma \ref{keylemma2} gives the bound $S$ and hence  we can obtain (\ref{3rd}).
\end{proof}
\begin{proof}[Proof of Theorem \ref{thm3}]
We claim that
\be\label{nice9}
|\ti\Gamma-\Gamma_{\mathrm{OT}}|_{g_{\mathrm{OT}}}\leq C,
\ee
where $\Gamma_{\mathrm{OT}}$ is the Christoffel symbols of $g_{\mathrm{OT}}$.
Indeed, we just need to bound the $g_{\mathrm{OT}}$ norm of $\ti\Gamma$. Thanks to (\ref{use3}) and (\ref{use4}), we have
\begin{align*}
|\ti\Gamma|_{g_{\mathrm{OT}}}^2
=&(g_{\mathrm{OT}})_{k\ov{q}}(g_{\mathrm{OT}})^{\ov{r}i}(g_{\mathrm{OT}})^{\ov{s}j}\ti g^{\ov{\ell}k}\ti g^{\ov{q}p}\lf(\de_{i}\ti g_{j\ov{\ell}}\rt)\lf(\de_{\ov{r}}\ti g_{p\ov{s}}\rt)\\
=&(g_{\mathrm{OT}})_{m\ov{m}}(g_{\mathrm{OT}})^{\ov{r}i}(g_{\mathrm{OT}})^{\ov{s}j}\ti g^{\ov{m}m}\ti g^{\ov{m}m}\lf(\de_{i}\ti g_{j\ov{m}}\rt)\lf(\de_{\ov{r}}\ti g_{m\ov{s}}\rt)\\
&+\mbox{other terms bounded by constant}\\
=&(g_{\mathrm{OT}})_{m\ov{m}}(g_{\mathrm{OT}})^{\ov{r}i}(g_{\mathrm{OT}})^{\ov{m}m}\ti g^{\ov{m}m}\ti g^{\ov{m}m}\lf(\de_{i}\ti g_{m\ov{m}}\rt)\lf(\de_{\ov{r}}\ti g_{m\ov{m}}\rt)\\
&+\mbox{other terms bounded by constant}\leq C,
\end{align*}
as required. Now we use (\ref{3rd}), (\ref{nice9}) and the fact $\ti g\leq Cg_{\mathrm{OT}}$ to deduce
$$
|\n_{\mathrm{OT}} g|_{g_{\mathrm{OT}}}\leq |\ti\n g|_{g_{\mathrm{OT}}}+C\leq |\ti\n g|_{\ti g}+C\leq C,
$$
which completes the proof of Theorem \ref{thm3}.
\end{proof}


{\small\begin{flushleft}
Tao Zheng\\
School of Mathematics and Statistics\\
Beijing Institute of Technology\\
Beijing 100081\\
the People's Republic of China\\
E-mail: zhengtao08@amss.ac.cn
\end{flushleft}}

\end{document}